\documentclass[a4paper,11pt]{article}
\raggedbottom
\hfuzz3pt
\usepackage{epsf,graphicx,epsfig}
\usepackage{amscd}
\usepackage{amsmath,latexsym,amssymb,amsthm}
\usepackage[nospace,noadjust]{cite}
\usepackage{textcomp}
\usepackage{setspace,cite}
\usepackage{lscape,fancyhdr,fancybox}
\usepackage{stmaryrd}
\usepackage[all,cmtip]{xy}
\usepackage{tikz}
\usepackage{cancel}
\usetikzlibrary{shapes,arrows,decorations.markings}
\setlength{\unitlength}{0.4in}

\usepackage{graphicx}

\usepackage{color}
\usepackage{url}
\usepackage{enumerate}
\usepackage[mathscr]{euscript}

\setlength{\topmargin}{0in}
\setlength{\textheight}{9in}   
\setlength{\textwidth}{6.2in}    
\setlength{\oddsidemargin}{0in}
\setlength{\evensidemargin}{0in}
\setlength{\headheight}{26pt}
\setlength{\headsep}{5pt}

  \theoremstyle{plain}

    \newtheorem{theorem}{Theorem}[section]
    \newtheorem{prop}[theorem]{Proposition}

\theoremstyle{definition}
    \newtheorem{definition}[theorem]{Definition}
        \newtheorem{remark}[theorem]{Remark}
\theoremstyle{remark}

\newcommand{\Hom}{\operatorname{Hom}}

\newcommand{\fg}{\mathfrak g}

\begin{document}

\allowdisplaybreaks

\title{Representation and cohomology of embedding tensors on Malcev algebras}

\author{Tao Zhang, \quad Wei Zhong}

\date{}

\maketitle

\footnotetext{{\it{Keyword}: Embedding tensor, Malcev algebra, representation, cohomology, deformations.}}

\footnotetext{{\it{Mathematics Subject Classification (2020)}}: 17D10, 17B38, 17B56.}

\begin{abstract}
We introduce the concept of embedding tensor on Malcev algebras.
The  representation and cohomology theory of embedding tensor on Malcev algebras are studied.
Some applications in deformation and abelian extension are investigated.
\end{abstract}

\section{Introduction}
Embedding tensors arise in the study of supergravity theory and higher gauge theories.
The mathematical concept "embedding tensor" is associated with tensor hierarchies which plays a very important role in the construction of supergravity theories and  higher gauge theories.
For reference about embedding tensors, Lie-Leibniz triples and associated tensor hierarchies, see \cite{bon2,wit,wit2,wit3,wit4,kotov-strobl,nicolai,Pal}.
Given a Lie algebra $(\mathfrak{g}, [~,~])$ and a representation $(V, \rho)$, a linear map $T: V \rightarrow \mathfrak{g}$ is said to be an embedding tensor on $\mathfrak{g}$ with respect to the representation $(V, \rho)$ if the map $T$ satisfies the following identity:
\begin{align}
    [T(u), T(v)] = T (\rho (Tu) v), \text{ for } u, v \in V.
\end{align}
Recently, the mathematical theory of embedding tensors on Lie algebras, Hom-Lie algebras and 3-Lie algebras was studied in \cite{sheng-embd,TY,Das1,HHSZ}.
It is showed that an embedding tensor gives rise to a Leibniz algebra that further gives rise to higher algebra structure,  see \cite{kotov-strobl,lavau,lavau-p,lavau-stas}.

On the other hand, Malcev algebras were introduced by Malcev \cite{Mal}, who called these objects Moufang-Lie algebras.
Malcev algebras play a key role in the geometry of smooth loops since the tangent algebra of a locally analytic Moufang loop is a Malcev algebra.
For general theory about Malcev algebras, see \cite{E1,Fi,Sag}.
In \cite{Yam1},  it is proved that any Lie algebra $G$ with a direct sum decomposition $G=D\oplus M$ with $D$ is subalgebra of $G$ and
$[D, M]\subset M$,  then there is a Malcev algebra on $M$.
In \cite{Yam2},  Yamaguti introduced a cohomology theoy for a Malcev algebra $\fg$ with coefficients in a representation $M$.
Recently, Kupershmidt operators on Malcev algebras were studied  in \cite{Mab1,Mab}.

Since Malcev algebras are generalization of Lie algebras, it is natural to extend the theory of embedding tensor on Lie algebras to the Malcev algebras setting.
In the present paper, we introduce the concept of embedding tensor on Malcev algebras.
We will show that this new concept of embedding tensor on Malcev algebras give rise to Malcev dialgebras in \cite{Bre}.
We also investigate the representation and cohomology  theory for this type of Malcev algebras.
As applications, we will study the infinitesimal deformation and abelian extension theory of embedding tensor on Malcev algebras.

The organization of this paper is as follows.
In Section 2, we review some basic facts about Malcev algebras and Malcev dialgebras.
We introduced the concept of embedding tensor Malcev algebras and study its elementary properties.
In Section 3, we investigate the representation theory of embedding tensors on Malcev algebras.
We show that given an embedding tensor and a representation,we can construct a new embedding tensor on their direct sum spaces which is called  the semidirect product.
In  Section 4, we study abelian extensions of embedding tensors on Malcev algebras.
It is proved that equivalent classes of abelian extensions are one-to-one correspondence to the elements of the second cohomology groups.
In  the last Section 5, we study infinitesimal deformations of embedding tensor Malcev algebras.
The notion of Nijenhuis operators is introduced to describe trivial deformations.

Throughout the rest of this paper, we work over a fixed field of characteristic 0.
The space of linear maps from a vector space  $V$ to $W$ is denoted by $\Hom(V,W)$.
\section{Embeding tensor on Malcev algebras}
In this section, we introduce the concept of embedding tensor on Malcev algebras  and give its elementary properties.

\begin{definition}[\cite{Mal}]
A Malcev algebra is a vector space $\fg$ endowed with a skew-symmetric bracket $[~,~]: M\times M\to M$ satisfying the Malcev identity
\begin{align}
J(x,y,[x,z])=[J(x,y,z),x],
\end{align}
for all $x, y, z \in \fg$, where $J(x,y,z)=\big[[x,y],z\big]+\big[[y,z],x\big]+\big[[z,x],y\big]$ is the Jacobiator of $x,y,z.$
\end{definition}

\begin{remark}[\cite{Sag}]
 The Malcev identity is equivalent to Sagle's identity:
\begin{align}
[[x, z],[y, t]]=[[[x, y], z], t]+[[[y, z], t], x]+[[[z, t], x], y]+[[[t, x], y], z],
\end{align}
for all $x, y, z, t \in \fg$.
\end{remark}

Let $(\fg,[~,~])$ and $\left(\fg^{\prime},[~,~]^{\prime}\right)$ be two Malcev algebras. A linear map $\phi: \fg \rightarrow \fg^{\prime}$ is said to be a homomorphism of Malcev algebras if
$[\phi(x), \phi(y)]^{\prime}=\phi([x, y]), \text { for all } x, y \in \fg.$

\begin{definition}
 A representation of a Malcev algebra $\fg$ on a vector space $M$ is a map $\rho: \fg \to \operatorname{End}(M)$ such that
\begin{align}\label{def:rep}
\rho([[x, y], z])=\rho(x) \rho(y) \rho(z)-\rho(z) \rho(x) \rho(y)+\rho(y) \rho([z, x])-\rho([y, z]) \rho(x),
\end{align}
for all $x, y, z \in \fg$.
\end{definition}

Given a representation of a Malcev algebra $\fg$ on $M$. Define on the direct sum $\fg \oplus M$ the bracket by
\begin{align}
{\left[x+m, y+n\right]} & \triangleq\left[x, y\right]+\rho(x)(m)-\rho(y)(n).
\end{align}
Then  $\fg\oplus M$ is a Malcev algebra.
This is called the semidirect product of a Malcev algebra $\fg$ and $M$.

\begin{definition}
 Let $(\fg,[~,~ ])$ be a  Malcev algebra and $(M , \rho)$ be a representation of $\fg$. A linear map $T : M \rightarrow \fg$ is called an embedding tensor operator if it satisfying the following condition
 \begin{align}\label{def:et}
 [T(m),T(n)]=T \big(\rho(T(m))(n)\big), \quad  \forall m, n \in M.
 \end{align}
 In this case, $(\fg,M, T)$ is called an embedding tensor on Malcev algebra $\fg$.
\end{definition}

A right Malcev dialgebra was introduced in \cite{Bre}.
In this paper, we introduce the left one.
\begin{definition}
A left Malcev dialgebra is a vector space with a non-skew-symmetric bracket $[~,~]: M\times M\to M$ satisfying right anticommutativity and the
following identity:
\begin{align}
&[[x,y]+[y,x],z]=0,\\
&[x,[y,[z,t]]]=[y,[z,[x,t]]]+[z,[[x,y],t]]+[[x,z],[y,t]] + [[x,[y,z]],t].
\end{align}
\end{definition}

A simple example of  Malcev dialgebra is given as follows.
\begin{prop}\label{prop-hemi}
Let  $\fg$ be a Malcev algebra and $(M,\rho)$ is a representation of $\fg$. Define on the direct sum $\fg \oplus M$ the bracket by
\begin{align}\label{eq001}
{\left[x+m, y+n\right]_{H} } & \triangleq\left[x, y\right]+\rho(x)(n).
\end{align}
Then  $\fg\oplus M$ is a left Malcev dialgebra under the above bracket \eqref{eq001}.
This is called the hemi-semidirect product of a Malcev algebra $\fg$ and $M$ which is denoted by $\mathfrak{g} \ltimes_H M$.
\end{prop}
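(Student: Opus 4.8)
The plan is to verify directly that the bracket $[~,~]_H$ on $\fg\oplus M$ satisfies the three defining axioms of a left Malcev dialgebra: right anticommutativity, the ``$[[x,y]+[y,x],z]=0$'' identity, and the long five-term identity. I would begin by fixing notation: write elements of $\fg\oplus M$ as $\mathfrak{x}=x+m$, $\mathfrak{y}=y+n$, $\mathfrak{z}=z+p$, $\mathfrak{t}=t+q$, with $x,y,z,t\in\fg$ and $m,n,p,q\in M$, and recall that $[\mathfrak{x},\mathfrak{y}]_H=[x,y]+\rho(x)(n)$, so in particular the $\fg$-component of any bracket only sees the $\fg$-parts of the inputs, while the $M$-component is $\rho$ of the first $\fg$-part applied to the second $M$-part.

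For the first two axioms the computation is short. Right anticommutativity, $[\mathfrak{x},\mathfrak{y}]_H=-[\mathfrak{y},\mathfrak{x}]_H$ whenever the appropriate arguments are swapped in the dialgebra sense, and the relation $[[\mathfrak{x},\mathfrak{y}]_H+[\mathfrak{y},\mathfrak{x}]_H,\mathfrak{z}]_H=0$ both follow from skew-symmetry of $[~,~]$ on $\fg$: one checks $[\mathfrak{x},\mathfrak{y}]_H+[\mathfrak{y},\mathfrak{x}]_H=\rho(x)(n)+\rho(y)(m)\in M$, and bracketing any element of $M$ on the left with anything gives an $\fg$-component $[0,z]=0$ and an $M$-component $\rho(0)(\cdot)=0$. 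So that axiom is immediate. The genuine work is the five-term identity
\[
[\mathfrak{x},[\mathfrak{y},[\mathfrak{z},\mathfrak{t}]_H]_H]_H=[\mathfrak{y},[\mathfrak{z},[\mathfrak{x},\mathfrak{t}]_H]_H]_H+[\mathfrak{z},[[\mathfrak{x},\mathfrak{y}]_H,\mathfrak{t}]_H]_H+[[\mathfrak{x},\mathfrak{z}]_H,[\mathfrak{y},\mathfrak{t}]_H]_H+[[\mathfrak{x},[\mathfrak{y},\mathfrak{z}]_H]_H,\mathfrak{t}]_H.
\]
I would split this into its $\fg$-component and its $M$-component. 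On the $\fg$-component every bracket reduces to the bracket of $\fg$-parts only, so the identity becomes exactly Sagle's identity (the Remark after the definition of Malcev algebra) applied to $x,y,z,t$: indeed the right-hand side is $[[[x,y],z],t]$-type terms matching the four summands, and the left-hand side is $[x,[y,[z,t]]]$; checking the bookkeeping of which term is which is routine but purely a matter of matching Sagle's identity term by term.

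The main obstacle, and the heart of the proof, is the $M$-component. Here only the $M$-parts that survive are those in which the relevant input is eventually hit by a string of $\rho$'s; tracing through, the term $[\mathfrak{x},[\mathfrak{y},[\mathfrak{z},\mathfrak{t}]_H]_H]_H$ contributes $\rho(x)\rho(y)\rho(z)(q)$, and similarly each of the five terms contributes expressions of the form (product of $\rho$'s of various $\fg$-elements or their brackets)$(q)$, together with terms where the $M$-input comes from one of $m,n,p$ rather than $q$ — but one must check those extra terms cancel or recombine correctly. Collecting the coefficients of $q$ gives precisely the defining identity \eqref{def:rep} of a representation of a Malcev algebra, namely $\rho([[x,y],z])=\rho(x)\rho(y)\rho(z)-\rho(z)\rho(x)\rho(y)+\rho(y)\rho([z,x])-\rho([y,z])\rho(x)$, so this is where the representation axiom gets used; the terms in $m,n,p$ either vanish because their first slot becomes $0\in\fg$ or reduce again to the $\fg$-side Malcev/Sagle identity contracted against $\rho$. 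I expect the only delicate point is careful sign and slot bookkeeping in the $M$-component expansion — making sure, for instance, that in $[\mathfrak{z},[[\mathfrak{x},\mathfrak{y}]_H,\mathfrak{t}]_H]_H$ the inner bracket's $\fg$-part is $[x,y]$ so the term is $\rho(z)\rho([x,y])(q)$, and reconciling this against the representation identity which is phrased with $\rho([[x,y],z])$ rather than $\rho(z)\rho([x,y])$. Once the $M$-component is organized as ``$\rho$ applied to the Malcev identity'' plus ``manifestly vanishing terms,'' the proof closes.
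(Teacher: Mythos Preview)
Your approach is correct and essentially identical to the paper's: both verify the two dialgebra axioms by direct computation, with the $\fg$-component of the five-term identity reducing to the Malcev identity for $\fg$ and the $M$-component reducing (after the relabeling $(x,y,z)\mapsto(y,z,x)$) to the representation identity \eqref{def:rep}. Your worry about stray $m,n,p$ terms is unfounded --- since $[a+u,b+v]_H=[a,b]+\rho(a)(v)$ ignores the $M$-part of its \emph{first} argument, in every nested bracket only the rightmost $M$-input $q$ survives, exactly as the paper's computation shows.
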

\begin{proof}
We are going to show that $\mathfrak{g} \ltimes_H M$ is a left Malcev dialgebra. In fact
\begin{align*}
&[ [x+m, y+n]_{H}+[y+n, x+m ]_{H}, z+p]_{H}\\
=&[ [x, y ]+\rho(x)(m)+ [y, x ]+\rho(y)(n),z+p]_{H}\\
=&[0+\rho(x)(m)+\rho(y)(n),z+p]_{H}\\
=&0.
\end{align*}
By definition, we get
\begin{align*}
&[x+m,[y+n,[z+p,t+q]_{H}]_{H}]_{H}\\
=&[x+m,[y+n,[z,t]+\rho(z)(q)]_{H}]_{H}\\
=&[x,[y,[z,t]]]+\rho(x)\rho(y)\rho(z)(q)
\end{align*}
and
\begin{align*}
&[y+n,[z+p,[x+m,t+q]_{H}]_{H}]_{H}+[z+p,[[x+m,y+n]_{H},t+q]_{H}]_{H}\\
&+[[x+m,z+p]_{H},[y+n,t+q]_{H}]_{H} + [[x+m,[y+n,z+p]_{H}]_{H},t+q]_{H}\\
=&[[y,[z,[x,t]]+[z,[[x,y],t]]+[[x,z],[y,t]] + [[x,[y,z]],t]\\
&+\rho(y)\rho(z)\rho(x)(q)+\rho(z)\rho([x,y])(q)\\
&+\rho([x,z])\rho(y)(q)+\rho([x,[y,z]])(q).
\end{align*}
Thus the right hand side terms of the above two equations are cancelled since $M$ is a representation of $\fg$.
Therefore we obtain that $\mathfrak{g} \ltimes_H M$ is a left Malcev dialgebra.
This complete the proof.
\end{proof}

\begin{prop}\label{o-char}
A linear map $T: M \rightarrow \mathfrak{g}$ is an $\mathcal{O}$-operator on $M$ over $\mathfrak{g}$ if and only if the graph
\begin{align*}
\mathrm{Gr} (T) := \{ (T(m), m) | m \in M \} \subset \mathfrak{g} \oplus M
\end{align*}
is a subalgebra of the hemi-semidirect product $\mathfrak{g} \ltimes_H M$.
\end{prop}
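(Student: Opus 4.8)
The plan is to use the elementary fact that a linear subspace of the left Malcev dialgebra $\mathfrak{g}\ltimes_H M$ is a subalgebra exactly when it is closed under the hemi-semidirect bracket \eqref{eq001}. First I would observe that $\mathrm{Gr}(T)$ is automatically a linear subspace of $\mathfrak{g}\oplus M$, since it is the image of the injective linear map $m\mapsto (T(m),m)$ (and in fact $\mathrm{Gr}(T)\cong M$ as a vector space). Consequently, the only thing that needs to be verified in order to decide whether $\mathrm{Gr}(T)$ is a subalgebra is closure under the bracket $[~,~]_H$.

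Next I would carry out the one relevant computation. For arbitrary $m,n\in M$, applying \eqref{eq001} with $x=T(m)$ and $y=T(n)$ gives
\[
[(T(m),m),(T(n),n)]_H=\big([T(m),T(n)],\,\rho(T(m))(n)\big).
\]
An element $(a,p)\in\mathfrak{g}\oplus M$ belongs to $\mathrm{Gr}(T)$ if and only if $a=T(p)$; hence the element displayed above lies in $\mathrm{Gr}(T)$ if and only if
\[
[T(m),T(n)]=T\big(\rho(T(m))(n)\big).
\]

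Finally, I would simply note that this last equation, required to hold for all $m,n\in M$, is precisely the defining identity \eqref{def:et} of an embedding tensor operator. This proves both implications at once: $\mathrm{Gr}(T)$ is a subalgebra of $\mathfrak{g}\ltimes_H M$ if and only if $T$ satisfies \eqref{def:et}. I do not expect any genuine obstacle here; the statement is a direct unwinding of the two definitions, the only point deserving an explicit word being that $\mathrm{Gr}(T)$ is a priori a linear subspace, so that closure under the bracket is the sole condition to check. As a byproduct of the computation, when $T$ is an embedding tensor operator the bracket transported to $M$ via the isomorphism $m\mapsto(T(m),m)$, namely $\{m,n\}:=\rho(T(m))(n)$, makes $M$ into a left Malcev dialgebra, which I would mention in a remark.
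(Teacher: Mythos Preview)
Your proposal is correct and follows essentially the same argument as the paper: compute $[(T(m),m),(T(n),n)]_H=\big([T(m),T(n)],\rho(T(m))(n)\big)$ and observe that this lies in $\mathrm{Gr}(T)$ for all $m,n$ if and only if \eqref{def:et} holds. Your additional remark that the transported bracket $\{m,n\}=\rho(T(m))(n)$ makes $M$ a left Malcev dialgebra is precisely the content of the paper's subsequent Proposition~\ref{prop0}.
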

\begin{proof}
For any $m,n\in M$, we have
\begin{align*}
[(T(m), m), (T(n), n)]_H= \big( [T(m), T(n)],~ \rho(T(m))(n)\big).
\end{align*}
Thus the bracket in $Gr (T)$ is closed if and only if \eqref{def:et} holds. Hence the result follows.
\end{proof}

\begin{prop}\label{prop0}
Let  $(M,\fg, T)$ be an embedding tensor on Malcev algebra. Then we have a left Malcev dialgebra on ${M}$  with bracket defined by
$$[m, n]_{M}\triangleq \rho(T(m))(n),$$ 
for all $m,n\in M$,
\end{prop}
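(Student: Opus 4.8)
The plan is to verify directly that the bracket $[m,n]_M = \rho(T(m))(n)$ on $M$ satisfies the two defining axioms of a left Malcev dialgebra, namely right anticommutativity together with
\begin{align*}
&[[x,y]_M+[y,x]_M,z]_M=0,\\
&[x,[y,[z,t]_M]_M]_M=[y,[z,[x,t]_M]_M]_M+[z,[[x,y]_M,t]_M]_M+[[x,z]_M,[y,t]_M]_M+[[x,[y,z]_M]_M,t]_M.
\end{align*}
The slick way, which I would actually carry out, is to invoke Proposition~\ref{o-char}: the graph $\mathrm{Gr}(T)=\{(T(m),m)\mid m\in M\}$ is a subalgebra of the hemi-semidirect product $\mathfrak{g}\ltimes_H M$, which by Proposition~\ref{prop-hemi} is a left Malcev dialgebra. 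The projection $\pi\colon \mathrm{Gr}(T)\to M$, $(T(m),m)\mapsto m$, is a linear isomorphism, and from the computation in the proof of Proposition~\ref{o-char} we have $[(T(m),m),(T(n),n)]_H=([T(m),T(n)],\rho(T(m))(n))=(T([m,n]_M),[m,n]_M)$, using the embedding tensor identity \eqref{def:et}. Hence $\pi$ intertwines the bracket $[~,~]_H$ restricted to $\mathrm{Gr}(T)$ with the bracket $[~,~]_M$ on $M$; that is, $\pi$ is an isomorphism of non-skew-symmetric brackets. Since $\mathrm{Gr}(T)$ inherits the left Malcev dialgebra axioms from $\mathfrak{g}\ltimes_H M$ (the axioms are multilinear identities, so they pass to subalgebras), transporting them along $\pi$ shows that $(M,[~,~]_M)$ is a left Malcev dialgebra.

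The key steps in order are: first, record that $[(T(m),m),(T(n),n)]_H$ lies in $\mathrm{Gr}(T)$ and equals $(T([m,n]_M),[m,n]_M)$, which is exactly Proposition~\ref{o-char} combined with \eqref{def:et}; second, observe that $\pi$ is a bracket-preserving linear isomorphism onto $(M,[~,~]_M)$; third, note that $\mathrm{Gr}(T)$, being a subalgebra of the left Malcev dialgebra $\mathfrak{g}\ltimes_H M$, is itself a left Malcev dialgebra; fourth, conclude by transport of structure. As an alternative, for a self-contained argument one could expand both sides of each axiom using $[m,n]_M=\rho(T(m))(n)$ and the embedding tensor condition $[T(m),T(n)]=T(\rho(T(m))(n))$, reducing everything to the representation identity \eqref{def:rep} for $\rho$; this is essentially the same cancellation already performed in the proof of Proposition~\ref{prop-hemi}, now pulled back along $T$.

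The only mild obstacle is bookkeeping: one must make sure that every occurrence of a bracket inside $M$ of the form $\rho(\text{something})$ has its argument rewritten using \eqref{def:et} so that $T$ can be pulled out and the representation axiom applied — in particular, terms like $[[x,y]_M,t]_M$ become $\rho(T(\rho(T(x))(y)))(t)=\rho([T(x),T(y)])(t)$. Once this substitution is made uniformly, the second dialgebra axiom for $[~,~]_M$ is literally the statement that $\rho([[T(x),T(y)],T(z)])$ decomposes as in \eqref{def:rep}, applied to $t$, so the verification is routine. I expect the subalgebra-transport argument to be the cleanest presentation, so that is the one I would write up, with the direct computation relegated to a remark if desired.
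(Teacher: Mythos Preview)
Your graph-transport argument is correct and is a genuinely different route from the paper's proof. The paper proceeds by the direct computation you relegate to a remark: it first records $T([m,n]_M)=[T(m),T(n)]_{\fg}$ from \eqref{def:et}, then expands $[[m,n]_M+[n,m]_M,p]_M$ and the second dialgebra identity in terms of $\rho$, and cancels using the skew-symmetry of $[~,~]_{\fg}$ and the representation axiom \eqref{def:rep}. Your approach instead packages this cancellation by observing that $\mathrm{Gr}(T)$ is a subalgebra of the left Malcev dialgebra $\mathfrak{g}\ltimes_H M$ (Propositions~\ref{prop-hemi} and~\ref{o-char}) and that the projection $\pi\colon(T(m),m)\mapsto m$ is a bracket-preserving linear isomorphism onto $(M,[~,~]_M)$, so the identities transport for free. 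The gain on your side is economy and conceptual clarity: the heavy lifting was already done in proving Proposition~\ref{prop-hemi}, and you avoid repeating essentially the same expansion. The paper's direct computation, on the other hand, is self-contained at the point of use and makes explicit exactly which substitution (namely $T([m,n]_M)=[T(m),T(n)]_{\fg}$) turns the dialgebra axiom into the representation identity \eqref{def:rep}; this is the ``bookkeeping'' step you correctly identify as the only real content of the direct route.
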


\begin{proof}
Using the identity \eqref{def:et}, we have
\begin{align*}
T([m,n]_{M})=T(\rho(T(m))(n))=[T(m),T(n)]_{\fg}.
\end{align*}
Thanks to \eqref{def:rep}, for $m, n, p, q\in M$, we get
\begin{align*}
&[[m,n]_{M},{p}]_{M} +[[n,m]_{M},{p}]_{M}\\
=&\rho \big(T([m,n]_{M})+T([n,m])_{M}\big)({p})\\
=&\rho \big([T(m),T(n)]_{\fg}+[T(n),T(m)]_{\fg}\big)({p})\\
=&0
\end{align*}
and
\begin{align*}
&[m,[n,[{p},{q}]_{M}]_{M}]_{M}-[[n,[{p},[m,{q}]_{M}]_{M}-[{p},[[m,n]_{M},{q}]_{M}]_{M}-[[m,{p}]_{M},[n,{q}]_{M}]_{M} \\
&- [[m,[n,{p}]_{M}]_{M},{q}]_{M}\\
=&\rho(T(m))\rho(T(n))\rho(T({p}))({q})-\rho(T(n))\rho(T({p}))\rho(T(m))({q})-\rho(T({p}))\rho(T([m,n]_{M}))({q})\\
&-\rho(T([m,{p}]_{M}))\rho(T(n))({q})-\rho(T([m,[n,{p}]_{M}]_{M}))({q})\\
=&\rho(T(m))\rho(T(n))\rho(T({p}))({q})-\rho(T(n))\rho(T({p}))\rho(T(m))({q})-\rho(T({p}))\rho([T(m),T(n)]_{\fg})({q})\\
&-\rho([T(m),T({p})]_{\fg})\rho(T(n))({q})-\rho([T(m),[T(n),T({p})]_{\fg}]_{\fg})({q})\\
=&0.
\end{align*}
Thus we obtain that $(M, [~,~]_{M})$ is a left Malcev dialgebra since $M$ is a representation of $\fg$.
This complete the proof.
\end{proof}

Similar computations shows that $M$  is a right Malcev dialgebra if we define the bracket by
$ [m, n]_{M}\triangleq \rho(T(n))(m)$.

\section{Representation}
In this section, the representation theory of an embedding tensor on Malcev algebras is given.

\begin{definition}
 Let $\big(M, \fg, T \big)$ be an embedding tensor on Malcev algebra. A representation $\left(\rho_{1}, \rho_{2}, \rho_{3}\right)$ of $\big(M, \fg, T \big)$ is an object $\big(V, W , T' \big)$ such that the following conditions are satisfied:

(i) $\rho_{1}$ and $\rho_{2}$ are representations of $\fg$ on $V$ and  $W$ respectively;

(ii) $T'$ is an equivariant map with respect to $\rho_{1}$ and $\rho_{2}$:
\begin{eqnarray}\label{deflm01}
T'\circ\rho_{1}(x)(v)=\rho_{2}(x)\circ T'(v);
\end{eqnarray}

(iii) there exists a map $\rho_{3}: M \rightarrow \operatorname{Hom}(W, V)$ such that
\begin{align}\label{deflm02}
\rho_{2}(T(m))(T'(v))=\rho_{3}(m)(T'(v)),
\end{align}
where $m \in M$ and $v \in V$;
\newpage
(iv) the following compatibility conditions are satisfied:
\begin{align}\label{deflm03}
\rho_{3}(m)\rho_{2}(x)\rho_{2}(y)(w)-\rho_{1}(x)\rho_{1}(y)\rho_{3}(m)(w)+\rho_{1}(y)\rho_{3}(\rho(x)(m))(w)\nonumber\\
+\rho_{3}(\rho(y)(m))\rho_{2}(x)(w)+\rho_{3}(\rho([x,y])(m))(w)=0,
\end{align}
\begin{align}\label{deflm04}
\rho_{3}(m) \rho_{2}\big([x,y]\big)(w_{})-\rho_{1}(x)\rho_{1}(y)\rho_{3}(m)(w_{})+\rho_{3}\big(\rho(x)\rho(y)(m)\big)(w_{})\nonumber\\
+\rho_{1}(y)\rho_{3}(m)\rho_{2}(x)(w_{})+\rho_{3}\big(\rho(x)(m)\big)\rho_{2}(y)(w_{})=0,
\end{align}
\begin{align}\label{deflm05}
\rho_{3}(m) \rho_{2}(x)\rho_{2}(y)(w_{})-\rho_{3}\big(\rho(y)\rho(x)(m)\big)(w_{})+\rho_{1}(x)\rho_{3}(\rho(y)(m))(w_{})\nonumber\\
-\rho_{1}(y)\rho_{3}(m)\rho_{2}(x)(w_{})+\rho_{1}([y,x])\rho_{3}(m)(w_{})=0,
\end{align}
where $x, y \in \fg, w \in W$ and $m \in M$.
\end{definition}
We construct semidirect products of an embedding tensor on Malcev algebra structure using its representations.
\begin{prop}
Given a representation of an embedding tensor $(M, \fg, T)$ on $(V, W, T')$. Define on $(M \oplus V, \fg \oplus W, T+T')$ an embedding tensor  by
\begin{align}
\left\{\begin{aligned}
\widehat{T}(m+v) & \triangleq T(m)+T'(v), \\
{\left[x+w_{1}, y+w_{2} \right]_{\widehat{g}} } & \triangleq\left[x, y\right]+\rho_{2}(x)\left(w_{2}\right)-\rho_{2}\left(y\right)(w_{1}), \\
\widehat{\rho}(x+w)(m+v) & \triangleq \rho(x)(m)+\rho_{1}(x)(v)-\rho_{3}(m)(w) .
\end{aligned}\right.
\end{align}
This is called the semidirect product of the embedding tensor $(M, \fg, T)$ and $(V, W, T')$.
\end{prop}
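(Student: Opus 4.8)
The plan is to check, one ingredient at a time, the three conditions that make $(M\oplus V,\ \fg\oplus W,\ \widehat T)$ an embedding tensor on a Malcev algebra: that $(\fg\oplus W,[~,~]_{\widehat g})$ is a Malcev algebra, that $\widehat\rho$ is a representation of it on $M\oplus V$, and that $\widehat T$ satisfies the identity \eqref{def:et}. The first point needs nothing new: since $\rho_2$ is a representation of $\fg$ on $W$, the bracket $[~,~]_{\widehat g}$ is exactly the semidirect product bracket described right after the definition of a representation of a Malcev algebra, hence it is a Malcev bracket. So the content lies in the last two points.

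For the representation axiom I would substitute $X_i=x_i+w_i\in\fg\oplus W$ and $m+v\in M\oplus V$ into identity \eqref{def:rep} written for $\widehat\rho$ and expand both sides. On the left one first computes $[[X_1,X_2],X_3]_{\widehat g}$, whose $\fg$-component is $[[x_1,x_2],x_3]$ and whose $W$-component is $\rho_2([x_1,x_2])w_3-\rho_2(x_3)\rho_2(x_1)w_2+\rho_2(x_3)\rho_2(x_2)w_1$, and then applies $\widehat\rho(-)$ to $m+v$; on the right one iterates $\widehat\rho$ three times, tracking the $M$- and $V$-components produced at each stage. I would then sort all resulting terms by type: the terms involving neither $v$ nor any $w_i$ collapse to \eqref{def:rep} for $\rho$, the terms linear in $v$ collapse to \eqref{def:rep} for $\rho_1$, and the terms linear in $w_1$, in $w_2$ and in $w_3$ reduce, after the evident relabelling of $x_1,x_2,x_3$, to the three compatibility identities \eqref{deflm03}, \eqref{deflm04}, \eqref{deflm05} respectively; the "bracket" terms $\rho_3(\rho([x,y])m)$, $\rho_3(m)\rho_2([x,y])$ and $\rho_1([y,x])\rho_3(m)$ appearing in those three identities are what pins down which $w_i$-family matches which. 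I expect this to be the main obstacle: nothing is deep, but the threefold composition of $\widehat\rho$ produces a large number of cross terms and one must make sure each of the conditions (iv) is used exactly once and with the correct sign.

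Finally, for \eqref{def:et} I would take $m+v,\ n+u\in M\oplus V$ and compute the two sides directly. The left side is
\[
[\widehat T(m+v),\widehat T(n+u)]_{\widehat g}=[T(m),T(n)]+\rho_2(T(m))(T'(u))-\rho_2(T(n))(T'(v)),
\]
while the right side is
\[
\widehat T\big(\widehat\rho(\widehat T(m+v))(n+u)\big)=T\big(\rho(T(m))(n)\big)+T'\big(\rho_1(T(m))(u)\big)-T'\big(\rho_3(n)(T'(v))\big).
\]
Comparing $\fg$-components uses the embedding tensor identity \eqref{def:et} for $T$; comparing $W$-components, the first terms agree by the equivariance \eqref{deflm01} applied with $x=T(m)$, and the remaining terms agree by the compatibility condition \eqref{deflm02}. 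This establishes \eqref{def:et} for $\widehat T$, and hence $(M\oplus V,\fg\oplus W,\widehat T)$ is an embedding tensor on a Malcev algebra, which is the semidirect product of $(M,\fg,T)$ and $(V,W,T')$.
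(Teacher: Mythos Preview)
Your proposal is correct and follows essentially the same route as the paper's proof: both first observe that $(\fg\oplus W,[~,~]_{\widehat g})$ is the standard semidirect-product Malcev algebra, then verify the embedding tensor identity for $\widehat T$ using \eqref{deflm01} and \eqref{deflm02}, and finally check the representation axiom for $\widehat\rho$ by expanding both sides and sorting the terms so that the $M$-part uses \eqref{def:rep} for $\rho$, the $v$-part uses \eqref{def:rep} for $\rho_1$, and the three $w_i$-families are handled by \eqref{deflm03}--\eqref{deflm05}. The paper carries out the full expansion explicitly (and checks $\widehat T$ before $\widehat\rho$, the reverse of your order), but the argument is the same; one small caveat is that the actual pairing turns out to be $w_1\leftrightarrow\eqref{deflm05}$, $w_2\leftrightarrow\eqref{deflm03}$, $w_3\leftrightarrow\eqref{deflm04}$, so your tentative ``respectively'' is off, though your remark that the bracket-type terms pin down the matching is exactly right.
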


\begin{proof}
Consider the Malcev algebra $\fg \oplus W$ together with bracket given by
\begin{align*}
{\left[x+w_{1}, y+w_{2} \right] } & \triangleq\left[x, y\right]+\rho_{2}(x)\left(w_{2}\right)-\rho_{2}\left(y\right)(w_{1}).
\end{align*}
We are going to prove that $(M \oplus V, \fg \oplus W, T+T')$ is an embedding tensor on Malcev algebra $\fg \oplus W$.
First we prove that $\widehat{T}$ is indeed an embedding tensor operator:
\begin{align}\label{semidirect01}
[\widehat{T}(m+u),\widehat{T}(n+v)]=\widehat{T} \big(\widehat{\rho}(\widehat{T}(m+u))(n+v)\big).
\end{align}
The left hand side of \eqref{semidirect01} is
\begin{align*}
&[\widehat{T}(m+u),\widehat{T}(n+v)]\\
=&[T(m)+T'(u),T(n)+T'(v)]\\
=&[T(m),T(n)]+\rho_{2}(T(m))(T'(v))-\rho_{2}(T(n))(T'(u)),
\end{align*}
and the right hand side of \eqref{semidirect01} is
\begin{align*}
&\widehat{T} \big(\widehat{\rho}(\widehat{T}(m+u))(n+v)\big)\\
=&\widehat{T} \big(\widehat{\rho}(T(m)+T'(u))(n+v)\big)\\
=&\widehat{T} \big(\rho(T(m))(n)+\rho_{1}(T(m))(v)-\rho_{3}(n)(T'(u))\big)\\
=&T \big(\rho(T(m))(n)\big)+T' \big(\rho_{1}(T(m))(v)-\rho_{3}(n)(T'(u))\big).
\end{align*}
Thus the two sides of \eqref{semidirect01} are equal if and only if the compatibility condition \eqref{deflm01} and  \eqref{deflm02}   holds.

Next we prove that $\widehat{\rho}$ is indeed a representation of $\fg \oplus W$ on $M \oplus V$:
\begin{align}\label{semidirect02}
&\widehat{\rho}\big(\big[[x+w_{1}, y+w_{2}], z+w_{3}\big]\big)(m+v) \nonumber \\
=&\widehat{\rho}(x+w_{1})\widehat{\rho}(y+w_{2})\widehat{\rho}(z+w_{3})(m+v)
-\widehat{\rho}(z+w_{3})\widehat{\rho}(x+w_{1})\widehat{\rho}(y+w_{2})(m+v) \nonumber \\
&+\widehat{\rho}(y+w_{2})\widehat{\rho}\big([z+w_{3},x+w_{1}]\big)(m+v)
-\widehat{\rho}\big([y+w_{2},z+w_{3}]\big)\widehat{\rho}(x+w_{1})(m+v).
\end{align}
The left hand side of  \eqref{semidirect02} is
$$
\begin{aligned}
&\widehat{\rho}\big(\big[[x+w_{1}, y+w_{2}], z+w_{3}\big]\big)(m+v)\\
=& \widehat{\rho} \big(\big[[x,y]+ \rho_{2}(x)(w_{2})-\rho_{2}(y)(w_{1}),z+w_{3}\big]\big)(m+v)\\
=& \widehat{\rho}\bigg( \big(\big[[x,y], z\big]\big)+\rho_{2}\big([x,y]\big)(w_{3})-\rho_{2}(z)\rho_{2}(x)(w_{2})+\rho_{2}(z)\rho_{2}(y)(w_{1}) \bigg)(m+v)\\
=&\rho \big(\big[[x,y], z\big]\big)(m)+ \rho_{1} \big(\big[[x,y], z\big]\big)(v)-\rho_{3}(m) \rho_{2}\big([x,y]\big)(w_{3})+\rho_{3}(m)\rho_{2}(z)\rho_{2}(x)(w_{2})\\
&-\rho_{3}(m) \rho_{2}(z)\rho_{2}(y)(w_{1}),
\end{aligned}
$$
and the right hand side of   \eqref{semidirect02}  is
$$
\begin{aligned}
&\widehat{\rho}(x+w_{1})\widehat{\rho}(y+w_{2})\widehat{\rho}(z+w_{3})(m+v)-\widehat{\rho}(z+w_{3})\widehat{\rho}(x+w_{1})\widehat{\rho}(y+w_{2})(m+v)\\
&+\widehat{\rho}(y+w_{2})\widehat{\rho}\big([z+w_{3},x+w_{1}]\big)(m+v)-\widehat{\rho}\big([y+w_{2},z+w_{3}]\big)\widehat{\rho}(x+w_{1})(m+v)\\
=&\rho(x)\rho(y)\rho(z)(m)+\rho_{1}(x)\rho_{1}(y)\rho_{1}(z)(v)-\rho_{1}(x)\rho_{1}(y)\rho_{3}(m)(w_{3})-\rho_{1}(x)\rho_{3}(\rho(z)(m))(w_{2})\\
&-\rho_{3}\big(\rho(y)\rho(z)(m)\big)(w_{1})-\rho(z)\rho(x)\rho(y)(m)-\rho_{1}(z)\rho_{1}(x)\rho_{1}(y)(v)+\rho_{1}(z)\rho_{1}(x)\rho_{3}(m)(w_{2})\\
&+\rho_{1}(z)\rho_{3}(\rho(y)(m))(w_{1})+\rho_{3}\big(\rho(x)\rho(y)(m)\big)(w_{3})+\rho(y)\rho([z,x])(m)+\rho_{1}(y)\rho_{1}([z,x])(v)\\
&-\rho_{1}(y)\rho_{3}(m)\rho_{2}(z)(w_{1})+\rho_{1}(y)\rho_{3}(m)\rho_{2}(x)(w_{3})-\rho_{3}\big(\rho([z,x])(m)\big)(w_{2})-\rho([y,z])\rho(x)(m)\\
&-\rho_{1}([y,z])\rho_{1}(x)(v)+\rho_{1}([y,z])\rho_{3}(m)(w_{1})+\rho_{3}\big(\rho(x)(m)\big)\rho_{2}(y)(w_{3})
-\rho_{3}\big(\rho(x)(m)\big)\rho_{2}(z)(w_{2}).
\end{aligned}
$$
Thus the two sides of   \eqref{semidirect02} are equal to each other since $\rho, \rho_{1}$ are representation of $\fg$ on $M, V$ and the compatibility condition \eqref{deflm03}, \eqref{deflm04} and \eqref{deflm05}  hold.
This complete the proof.
\end{proof}

\section{Abelian extensions}
Abelian extensions of Malcev algebras was studied in \cite{Yam2}.
In this section we study abelian extensions of embedding tensor on Malcev algebras using the second cohomology group.

\begin{definition} Let $(M, \fg, T,)$ and $(V, W, T')$ be two embedding tensors on Malcev algebra. An abelian  extension of $(M, \fg, T,)$ by $(V, W, T')$ is  a  short exact sequence
\begin{equation}
\xymatrix{
   0  \ar[r]^{} & {V} \ar[d]_{T'} \ar[r]^{i_{0}} & {\widehat{M}} \ar[d]_{\widehat{T}} \ar[r]^{p_{0}} &{M} \ar[d]_{T} \ar[r]^{} & 0 \\
   0 \ar[r]^{} & {W} \ar[r]^{i_{1}} & \widehat{\fg} \ar[r]^{p_{1}} &{\fg} \ar[r]^{} & 0
   }
\end{equation}
where $(\widehat{M}, \widehat{\fg}, \widehat{T},)$ is an embedding tensor on Malcev algebra.
We call $(\widehat{M}, \widehat{\fg}, \widehat{T},)$ an extension of  $(M, \fg, T,)$ by $(V, W, T')$, and denote it by $\widehat{E}$. It is an abelian extension if $(V, W, T')$ is an abelian embedding tensor on Malcev algebra (this means that the bracket on $W$ and $V$ are zero, the representation of $W$ on $V$ is trivial: for $\forall w \in W, \forall v \in V$, we have $\widehat{\rho}(w)(v)=0$).
\end{definition}

A splitting $\sigma=\left(\sigma_{0}, \sigma_{1}\right):(M, \fg, T,) \rightarrow(\widehat{M}, \widehat{\fg}, \widehat{T},)$ consists of linear maps $\sigma_{1}: \fg \rightarrow \widehat{\fg}$ and $\sigma_{0}: M \rightarrow \widehat{M}$ such that $p_{0} \circ \sigma_{0}=\mathrm{id}_{M}, p_{1} \circ \sigma_{1}=\mathrm{id}_{\fg}$ and $\widehat{T} \circ \sigma_{0}=\sigma_{1} \circ T$.

Two extensions of Lie algebras $\widehat{\mathrm{E}}: 0 \longrightarrow(V, W, T') \stackrel{i}{\longrightarrow}(\widehat{M}, \widehat{\fg}, \widehat{T},) \stackrel{p}{\longrightarrow}(M, \fg, T,) \longrightarrow 0$ and $\widetilde{\mathrm{E}}: 0 \longrightarrow(V, W, T') \stackrel{j}{\longrightarrow}(\widetilde{M}, \widetilde{\fg}, \widetilde{T}) \stackrel{q}{\longrightarrow}(M, \fg, T) \longrightarrow 0$ are equivalent, if there exists a morphism $F:(\widehat{M}, \widehat{\fg}, \widehat{T}) \longrightarrow(\widetilde{M}, \widetilde{\fg}, \widetilde{T})$ such that $F \circ i=j, q \circ F=p$

Let $(\widehat{M}, \widehat{\fg}, \widehat{T})$ be an extension of $(M, \fg, T)$ by $(V, W, T')$ and $\sigma:(M, \fg, T) \rightarrow(\widehat{M}, \widehat{\fg}, \widehat{T})$ be a splitting. Define $\rho_{1}: \fg \rightarrow \mathfrak{gl}(V)$,  $\rho_{2}: \fg \rightarrow \mathfrak{gl}(W)$ and $\rho_{3}: M \rightarrow \Hom(W,V)$ by
$$
\left\{\begin{aligned}
\rho_{1}(x)(v)=&\widehat{\rho}(\sigma_{1}(x))(v),\\
\rho_{2}(x)(w)=&[\sigma_{1}(x),w]_{\mathfrak{\widehat{g}}},\\
\rho_{3}(m)(w)=&-\widehat{\rho}(w)\sigma_{0}(m),\\
\end{aligned}\right.
$$
for all $x, y, z \in \fg, v \in V$ and $m \in M$.

\begin{prop}
 With the above notations, $\left(\rho_{1}, \rho_{2}, \rho_{3}\right)$ is a representation of $(M, \fg, T)$ on $(V, W, T')$. Furthermore, $\rho_i$ does not depend on the choice of the splitting $\sigma$. Moreover, equivalent abelian extensions give the same representation.
\end{prop}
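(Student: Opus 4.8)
The plan is to fix the splitting $\sigma=(\sigma_0,\sigma_1)$ once and for all, use it to identify $\widehat M\cong M\oplus V$ and $\widehat\fg\cong\fg\oplus W$ as vector spaces, and then verify the axioms (i)--(iv) of a representation one at a time by transporting the structure on $(\widehat M,\widehat\fg,\widehat T)$. Three structural facts carry all the weight. First, $W$ is an abelian ideal of the Malcev algebra $\widehat\fg$; in particular $[W,W]_{\widehat\fg}=0$ and $[\widehat\fg,W]_{\widehat\fg}\subseteq W$. Second, $V=\ker p_0$ is a $\widehat\fg$-submodule of $(\widehat M,\widehat\rho)$ (because $p_0$ intertwines $\widehat\rho$ with the action $\rho\circ p_1$ of $\widehat\fg$ on $M$), and on it $W$ acts trivially, $\widehat\rho(w)(v)=0$ for $w\in W$, $v\in V$ --- this is precisely the hypothesis that $(V,W,T')$ is an abelian embedding tensor, pulled into $\widehat E$. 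Third, $(\widehat M,\widehat\fg,\widehat T)$ is an embedding tensor on a Malcev algebra, and $\widehat T\circ\sigma_0=\sigma_1\circ T$, $\widehat T|_V=T'$, so in block form $\widehat T=T\oplus T'$ is ``diagonal''. Along the way the corrections $\chi(x,y):=[\sigma_1(x),\sigma_1(y)]_{\widehat\fg}-\sigma_1([x,y])\in W$ and $\xi(x,m):=\widehat\rho(\sigma_1(x))(\sigma_0 m)-\sigma_0(\rho(x)(m))\in V$ will appear, but every term containing one of them is annihilated by the first two facts, and this cancellation is the recurring mechanism of the whole proof.

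For axiom (i): the $\widehat\fg$-action on the submodule $V$ kills $W$, so it descends to an action of $\fg\cong\widehat\fg/W$ on $V$, namely $\rho_1(x)(v)=\widehat\rho(\sigma_1(x))(v)$, and since submodules and such quotients of a representation are again representations, $\rho_1$ satisfies \eqref{def:rep}; one may also check this by hand by substituting $\sigma_1(x),\sigma_1(y),\sigma_1(z)$ into \eqref{def:rep} for $\widehat\rho$, evaluating on $v\in V$, and dropping the $\chi$-terms via $\widehat\rho(W)(V)=0$. For $\rho_2$ one runs the identical argument with the adjoint representation of $\widehat\fg$ in place of $\widehat\rho$: $W$ is an abelian ideal, so $\operatorname{ad}|_W$ descends to a representation $\rho_2(x)(w)=[\sigma_1(x),w]_{\widehat\fg}$ of $\fg$ on $W$ --- the classical construction behind abelian extensions of Malcev algebras (cf.\ \cite{Yam2}) --- or, concretely, substitute $\sigma_1(x),\sigma_1(y),\sigma_1(z)$ into the representation identity for $\operatorname{ad}$ on $\widehat\fg$, evaluate on $w\in W$, and discard the $\chi$-terms via $[W,W]_{\widehat\fg}=0$.

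For axioms (ii) and (iii): expand the embedding tensor identity $[\widehat T(a),\widehat T(b)]_{\widehat\fg}=\widehat T\big(\widehat\rho(\widehat T(a))(b)\big)$ for the two choices $(a,b)=(\sigma_0 m,v)$ and $(a,b)=(v,\sigma_0 n)$. Using $\widehat T\sigma_0=\sigma_1 T$, $\widehat T|_V=T'$, $[W,W]_{\widehat\fg}=0$ and $\widehat\rho(W)(V)=0$, the first choice collapses to $\rho_2(T(m))(T'(v))=T'\big(\rho_1(T(m))(v)\big)$, which is the equivariance \eqref{deflm01}, and the second collapses to $\rho_2(T(n))(T'(v))=T'\big(\rho_3(n)(T'(v))\big)$, which is \eqref{deflm02}; here $\rho_3(m)(w):=-\widehat\rho(w)(\sigma_0 m)$ indeed lands in $V$ because $p_0\big(\widehat\rho(w)(\sigma_0 m)\big)=\rho(p_1 w)(m)=0$. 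For axiom (iv), the three compatibility conditions \eqref{deflm03}, \eqref{deflm04}, \eqref{deflm05} all come out of one source: the representation identity \eqref{def:rep} for $\widehat\rho$, applied to a triple with exactly two arguments in $\sigma_1(\fg)$ and the third equal to some $w\in W$, evaluated on $\sigma_0 m$. The three positions the $W$-entry can occupy produce, after the $\chi$- and $\xi$-corrections are annihilated by the two structural facts, exactly \eqref{deflm04} (from $(\sigma_1 x,\sigma_1 y,w)$), \eqref{deflm03} (from $(\sigma_1 y,w,\sigma_1 x)$), and \eqref{deflm05} (from $(w,\sigma_1 x,\sigma_1 y)$), up to the relabelling $x\leftrightarrow y$. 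Performing this cancellation bookkeeping --- verifying that the abelian hypotheses kill precisely the terms that must vanish and that the survivors reassemble into \eqref{deflm03}--\eqref{deflm05} --- is the one genuinely computational step, and it is where I expect essentially all the effort to be; the remaining verifications are purely structural.

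For the last two assertions: if $\sigma'=\sigma+(\eta,\tau)$ is another splitting, with $\eta\colon M\to V$ and $\tau\colon\fg\to W$, then $\rho_1'(x)(v)=\widehat\rho(\sigma_1 x+\tau x)(v)=\rho_1(x)(v)$ since $\widehat\rho(\tau x)(v)=0$; $\rho_2'(x)(w)=[\sigma_1 x+\tau x,w]_{\widehat\fg}=\rho_2(x)(w)$ since $[\tau x,w]_{\widehat\fg}=0$; and $\rho_3'(m)(w)=-\widehat\rho(w)(\sigma_0 m+\eta m)=\rho_3(m)(w)$ since $\widehat\rho(w)(\eta m)=0$. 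So $\rho_1,\rho_2,\rho_3$ do not depend on $\sigma$. Finally, if $F\colon\widehat E\to\widetilde E$ is an equivalence ($F\circ i=j$, $q\circ F=p$) and $\sigma$ is a splitting of $\widehat E$, then $F\circ\sigma$ is a splitting of $\widetilde E$; since $F$ is a morphism of embedding tensors on Malcev algebras that, under the given identifications, restricts to the identity on $(V,W,T')$, feeding $F\circ\sigma$ into the defining formulas and using that $F$ intertwines $\widehat\rho$ with $\widetilde\rho$ and the two brackets returns the same triple $(\rho_1,\rho_2,\rho_3)$. Hence equivalent abelian extensions induce the same representation, which completes the proof.
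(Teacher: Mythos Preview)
Your overall strategy matches the paper's: both verify the representation axioms by transporting the structure of $(\widehat M,\widehat\fg,\widehat T)$ along the splitting and repeatedly using that $W$ is an abelian ideal with $\widehat\rho(W)(V)=0$; the arguments for axiom (i), axiom (iv), splitting-independence, and invariance under equivalence are essentially identical in both treatments (and your organising principle for (iv) --- insert $w$ in each of the three slots of the representation identity --- is exactly how the paper's computation unfolds). There is, however, a genuine gap in your treatment of axioms (ii) and (iii).

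By feeding the test pairs $(\sigma_0 m,v)$ and $(v,\sigma_0 n)$ into the embedding tensor identity $[\widehat T(a),\widehat T(b)]_{\widehat\fg}=\widehat T\big(\widehat\rho(\widehat T(a))(b)\big)$ you obtain
\[
\rho_2(T(m))(T'(v))=T'\big(\rho_1(T(m))(v)\big)
\quad\text{and}\quad
T'\big(\rho_3(n)(T'(v))\big)=\rho_2(T(n))(T'(v)),
\]
but these are \eqref{deflm01} only for $x\in\operatorname{Im}T$, and (the type-corrected form of) \eqref{deflm02} only for $w\in\operatorname{Im}T'$. The embedding tensor identity cannot give more: both of its sides involve only elements of $\operatorname{Im}\widehat T$, so no choice of $(a,b)$ will produce an arbitrary $\sigma_1(x)$ or an arbitrary $w\in W$ where needed. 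The paper does not go through the embedding tensor identity here; it computes $T'\circ\rho_1(x)(v)=\widehat T\big(\widehat\rho(\sigma_1 x)(v)\big)$ and $T'\circ\rho_3(m)(w)=-\widehat T\big(\widehat\rho(w)(\sigma_0 m)\big)$ directly, for \emph{arbitrary} $x\in\fg$ and $w\in W$, and rewrites each as the appropriate bracket in $\widehat\fg$, getting the full equivariance \eqref{deflm01} and the full compatibility $T'\circ\rho_3(m)=\rho_2(T(m))$ on all of $W$. You should follow this direct route for (ii) and (iii), or else argue separately that the restricted versions you derive already force the full statements.
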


\begin{proof}
Firstly, we show that $\left(\rho_{1}, \rho_{2}, \rho_{3}\right)$ is well-defined. Since $\operatorname{Ker} p_{0} \cong W$, then for $w \in W$, we have $p_{0}(w)=0$. By the fact that $\left(p_{0}, p_{1}\right)$ is a homomorphism between $(\widehat{M}, \widehat{\fg}, \widehat{T})$ and $(M, \fg, T)$, we get
$$
p_{1}\left[\sigma_{1}(x), w\right]_{\widehat{\fg}}=\left[p_{1} \sigma_{1}(x), p_{1}(w)\right]_{\widehat{\fg}}=\left[p_{1} \sigma_{1}(x), 0\right]_{\widehat{\fg}}=0.
$$
Thus $\rho_{2}(x)(w)=\left[\sigma_{1}(x), w\right]_{\widehat{\fg}} \in \operatorname{ker} p_{1} \cong W$. Similar computations show that $\rho_{1}(x)(v)=$ $\widehat{\rho}\left(\sigma_{1}(x)\right)(v), \rho_{3}(v)(w)=-\widehat{\rho}(w)\left(\sigma_{0}(w)\right) \in \operatorname{Ker} p_{0}=V$. Now we will show that $\rho_{i}$ are independent of the choice of $\sigma$. In fact, if we choose another splitting $\sigma^{\prime}: \fg \rightarrow \widehat{\fg}$, then $p_{0}\left(\sigma_{0}(m)-\sigma_{0}^{\prime}(m)\right)=m-m=0, p_{1}\left(\sigma_{1}(x)-\sigma_{1}^{\prime}(x)\right)=x-x=0$, i.e. $\sigma_{0}(v)-\sigma_{0}^{\prime}(v) \in \operatorname{Ker} p_{0}=V$, $\sigma_{1}(x)-\sigma_{1}^{\prime}(x) \in \operatorname{Ker} p_{1}=W$. Thus, $\left[\sigma_{0}(m)-\sigma_{0}^{\prime}(m), w\right]_{\widehat{\fg}}=0,\left[\sigma_{1}(x)-\sigma_{1}^{\prime}(x), w\right]_{\widehat{\fg}}=0$, which implies that $\rho_{1}, \rho_{2}, \rho_{3}$ are independent on the choice of $\sigma$. Thus $\rho_{1}, \rho_{2}, \rho_{3}$ are well-defined.

Secondly, we check that $\left(\rho_{1}, \rho_{2}, \rho_{3}\right)$ is indeed a representation of $(M, \fg, T)$ on $(M, W, T')$. Since $(V, W, T')$ is an abelian embedding tensor on Malcev algebra, we have
\begin{align*}
&\rho_{2}([[x, y], z])(w)-\rho_{2}(x) \rho_{2}(y) \rho_{2}(z)(w)+\rho_{2}(z) \rho_{2}(x) \rho_{2}(y)(w)\\
&-\rho_{2}(y) \rho_{2}([z, x])(w)+\rho_{2}([y, z]) \rho_{2}(x)(w)\\
=&\bigg[\sigma_{1}(\big[[x, y], z\big]), w \bigg]-\bigg[\sigma_{1}(x),\big[\sigma_{1}(y),[\sigma_{1}(z),w]]\big]\bigg]+\bigg[\sigma_{1}(z),\big[\sigma_{1}(x),[\sigma_{1}(y),w]]\big]\bigg]\\
&-\bigg[\sigma_{1}(y),\big[\sigma_{1}([z, x]), w\big]\bigg]+\bigg[\sigma_{1}([y,z]),\big[\sigma_{1}(x), w\big]\bigg]\\
=&\bigg[\big[\sigma_{1}([x, y]), \sigma_{1}(z)\big], w \bigg]-\bigg[\sigma_{1}(x),\big[\sigma_{1}(y),[\sigma_{1}(z),w]]\big]\bigg]+\bigg[\sigma_{1}(z),\big[\sigma_{1}(x),[\sigma_{1}(y),w]]\big]\bigg]\\
&-\bigg[\sigma_{1}(y),\big[[\sigma_{1}(z), \sigma_{1}(x)], w\big]\bigg]+\bigg[[\sigma_{1}(y),\sigma_{1}(z)],\big[\sigma_{1}(x), w\big]\bigg]\\
=&\bigg[\big[[\sigma_{1}(x), \sigma_{1}(y)], \sigma_{1}(z)\big], w \bigg]-\bigg[\sigma_{1}(x),\big[\sigma_{1}(y),[\sigma_{1}(z),w]]\big]\bigg]+\bigg[\sigma_{1}(z),\big[\sigma_{1}(x),[\sigma_{1}(y),w]]\big]\bigg]\\
&-\bigg[\sigma_{1}(y),\big[[\sigma_{1}(z), \sigma_{1}(x)], w\big]\bigg]+\bigg[[\sigma_{1}(y),\sigma_{1}(z)],\big[\sigma_{1}(x), w\big]\bigg]\\
=&0,
\end{align*}
which implies that
\begin{align}
&\rho_{2}([[x, y], z])(w)-\rho_{2}(x) \rho_{2}(y) \rho_{2}(z)(w)+\rho_{2}(z) \rho_{2}(x) \rho_{2}(y)(w) \nonumber \\
&-\rho_{2}(y) \rho_{2}([z, x])(w)+\rho_{2}([y, z]) \rho_{2}(x)(w)=0.
\end{align}
Similarly, we have
\begin{align}
&\rho_{1}([[x, y], z])(w)-\rho_{1}(x) \rho_{1}(y) \rho_{1}(z)(w) +\rho_{1}(z) \rho_{1}(x) \rho_{1}(y)(w) \nonumber \\
&-\rho_{1}(y) \rho_{1}([z, x])(w)+\rho_{1}([y, z]) \rho_{1}(x)(w)=0.
\end{align}
For the equivalence between $\rho_{1}$ and $\rho_{2}$, we have
\begin{align}
T' \circ \rho_{1}(x)(v)=T' \circ \widehat{\rho}\left(\sigma_{1}(x)\right)(v)=\left[\sigma_{1}(x), T'(a)\right]=\rho_{2}(x) \circ T'(v).
\end{align}
For $\rho_{3}: M\rightarrow \Hom(W,V)$, we have
\begin{align}
T' \circ \rho_{3}(m)(w) & =-T' \circ \widehat{\rho}(w)\left(\sigma_{0}(m)\right)=-\left[w, \widehat{T} \sigma_{0}(m)\right] \nonumber \\
& =-\left[w, \sigma_{1}(T(m))\right]=\left[\sigma_{1}(T(m)), w\right]=\rho_{2}(T(m))(w) .
\end{align}
By the fact that $\left(p_{0}, p_{1}\right)$ is a homomorphism between $(\widehat{M}, \widehat{\fg}, \widehat{T})$ and $(M, \fg, T)$, we have
$$
\widehat{\rho}(\sigma_{1}(x))\sigma_{0}(m)-\sigma_{0}(\rho(x)(m)) \in V
$$
One also check that
\begin{align*}
&\rho_{3}(m)\rho_{2}(x)\rho_{2}(y)(w)-\rho_{1}(x)\rho_{1}(y)\rho_{3}(m)(w)+\rho_{1}(y)\rho_{3}(\rho(x)(m))(w)\nonumber\\
&+\rho_{3}(\rho(y)(m))\rho_{2}(x)(w)+\rho_{3}(\rho([x,y])(m))(w)\\
=&-\widehat{\rho}\big([\sigma_{1}(x),[\sigma_{1}(y),w]]\big)(\sigma_{0}(m))+\widehat{\rho}(\sigma_{1}(x))\widehat{\rho}(\sigma_{1}(y))
\widehat{\rho}(w)(\sigma_{0}(m))\\
&-\widehat{\rho}(\sigma_{1}(y))\widehat{\rho}(w)(\sigma_{0}(\rho(x)(m)))
-\widehat{\rho}([\sigma_{1}(x),w])(\sigma_{0}(\rho(y)(m)))-\widehat{\rho}(w)(\sigma_{0}(\rho([x,y])(m)))\\
=&-\widehat{\rho}\big([\sigma_{1}(x),[\sigma_{1}(y),w]]\big)(\sigma_{0}(m))+\widehat{\rho}(\sigma_{1}(x))\widehat{\rho}(\sigma_{1}(y))
\widehat{\rho}(w)(\sigma_{0}(m))\\
&-\widehat{\rho}(\sigma_{1}(y))\widehat{\rho}(w)\widehat{\rho}(\sigma_{1}(x))(\sigma_{0}(m))
-\widehat{\rho}([\sigma_{1}(x),w])\widehat{\rho}(\sigma_{1}(y))(\sigma_{0}(m))\\
&-\widehat{\rho}(w)\widehat{\rho}([\sigma_{1}(x),\sigma_{1}(y)])(\sigma_{0}(m))\\
=&0,
\end{align*}
which implies that
\begin{align*}
\rho_{3}(m)\rho_{2}(x)\rho_{2}(y)(w)-\rho_{1}(x)\rho_{1}(y)\rho_{3}(m)(w)+\rho_{1}(y)\rho_{3}(\rho(x)(m))(w)\nonumber\\
+\rho_{3}(\rho(y)(m))\rho_{2}(x)(w)+\rho_{3}(\rho([x,y])(m))(w)=0.
\end{align*}
Similarly, we have
\begin{align*}
\rho_{3}(m) \rho_{2}\big([x,y]\big)(w_{})-\rho_{1}(x)\rho_{1}(y)\rho_{3}(m)(w_{})+\rho_{3}\big(\rho(x)\rho(y)(m)\big)(w_{})\nonumber\\
+\rho_{1}(y)\rho_{3}(m)\rho_{2}(x)(w_{})+\rho_{3}\big(\rho(x)(m)\big)\rho_{2}(y)(w_{})=0,\\
\\
\rho_{3}(m)\rho_{2}(x)\rho_{2}(y)(w_{})-\rho_{3}\big(\rho(y)\rho(x)(m)\big)(w_{})+
\rho_{1}(x)\rho_{3}(\rho(y)(m))(w_{})\nonumber\\
-\rho_{1}(y)\rho_{3}(m)\rho_{2}(x)(w_{})+\rho_{1}([y,x])\rho_{3}(m)(w_{})=0.
\end{align*}
Therefore, $\left(\rho_{1}, \rho_{2}, \rho_{3}\right)$ is a representation of $(M, \fg, T)$.

Finally, suppose that $\widehat{\mathrm{E}}$ and $\widetilde{\mathrm{E}}$ are equivalent abelian extensions, and $F:(\widehat{M}, \widehat{\fg}, \widehat{T}) \longrightarrow$ $(\widetilde{M}, \widetilde{\fg}, \widetilde{T})$ be the morphism. Choose linear sections $\sigma$ and $\sigma^{\prime}$ of $p$ and $q$. Then we have $q_{1} F_{1} \sigma_{1}(x)=p_{1} \sigma_{1}(x)=x=q_{1} \sigma_{1}^{\prime}(x)$, thus $F_{1} \sigma_{1}(x)-\sigma_{1}^{\prime}(x) \in \operatorname{Ker} q_{1}=W$. Therefore, we obtain
$$
\left[\sigma_{1}^{\prime}(x), v+w\right]_{\widetilde{\fg}}=\left[F_{1} \sigma_{1}(x), v+w\right]_{\widetilde{\fg}}=F_{1}\left[\sigma_{1}(x), v+w\right]_{\hat{\fg}}=\left[\sigma_{1}(x), v+w\right]_{\hat{\fg}},
$$
which implies that equivalent abelian extensions give the same $\rho_{1}$ and $\rho_{2}$. Similarly, we can show that equivalent abelian extensions also give the same $\rho_{3}$. Therefore, equivalent abelian extensions also give the same representation. The proof is finished.

\end{proof}
Let $\sigma:(M, \fg, T) \rightarrow(\widehat{M}, \widehat{\fg}, \widehat{T})$ be a splitting of an abelian extension. Define the following linear maps:
\begin{equation}
\left\{\begin{array}{rlrlrl}
\theta: & M & \longrightarrow & W, ~~\qquad \theta(m) & \triangleq & \widehat{T} (\sigma_{0}(m))-\sigma_{1}(T(m)), \\
\omega: & \fg \wedge \fg & \longrightarrow & W, \qquad \omega(x, y) & \triangleq & \left[\sigma_{1}(x), \sigma_{1}(y)\right]_{\widehat{\fg}}-\sigma_{1}\left([x, y]_{\fg}\right), \\
\nu: & \fg \otimes M  & \longrightarrow &  V, ~\qquad \nu(x, m) & \triangleq & \widehat{\rho}\left(\sigma_{1}(x)\right) \sigma_{0}(m)-\sigma_{0}(\rho(x)(m)),
\end{array}\right.
\end{equation}
for all $x, y, z \in \fg, w \in W$ and $m \in M$.
\begin{theorem}
With the above notations, $(\theta, \omega, \nu)$ is a 2-cocycle of $(M, \fg, T)$ with coefficients in $(V, W, T')$ satisfying the conditions \eqref{2coc01}, \eqref{2coc02} and \eqref{2coc03} below.
\end{theorem}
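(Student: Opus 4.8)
The plan is to use the splitting $\sigma=(\sigma_0,\sigma_1)$ to transport the extension $\widehat E$ onto the vector spaces $M\oplus V$ and $\fg\oplus W$, and then to obtain \eqref{2coc01}, \eqref{2coc02} and \eqref{2coc03} as the ``kernel components'' of the three axioms that make $(\widehat M,\widehat\fg,\widehat T)$ an embedding tensor on a Malcev algebra. First I would check that $\theta,\omega,\nu$ take values where claimed: applying $p_1$ to $\theta(m)=\widehat T(\sigma_0(m))-\sigma_1(T(m))$ and using that $(p_0,p_1)$ is a morphism of embedding tensors gives $p_1(\theta(m))=T(p_0\sigma_0(m))-T(m)=0$, so $\theta(m)\in\ker p_1\cong W$; the analogous computations give $\omega(x,y)\in W$ and $\nu(x,m)\in V$. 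Identifying $\widehat\fg=\fg\oplus W$ via $\sigma_1\oplus i_1$ and $\widehat M=M\oplus V$ via $\sigma_0\oplus i_0$, and using that $(V,W,T')$ is abelian (so the brackets on $V$ and $W$ vanish and $\widehat\rho(w)(v)=0$), the transported structure maps become
\[
[(x,w_1),(y,w_2)]_{\widehat\fg}=\bigl([x,y],\ \omega(x,y)+\rho_2(x)w_2-\rho_2(y)w_1\bigr),
\]
\[
\widehat T(m,v)=\bigl(T(m),\ \theta(m)+T'(v)\bigr),\qquad
\widehat\rho(x,w)(m,v)=\bigl(\rho(x)m,\ \nu(x,m)+\rho_1(x)v-\rho_3(m)w\bigr),
\]
where $(\rho_1,\rho_2,\rho_3)$ is the representation attached to $\sigma$ in the preceding Proposition.

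Next I would impose, one at a time, the three axioms of an embedding tensor on a Malcev algebra for $(\widehat M,\widehat\fg,\widehat T)$: the Malcev identity for $[\cdot,\cdot]_{\widehat\fg}$, the representation identity \eqref{def:rep} for $\widehat\rho$, and the embedding-tensor identity \eqref{def:et} for $\widehat T$. Each is an equality of elements of $\fg\oplus W$ (resp.\ $M\oplus V$) and splits into its component in the quotient ($\fg$, resp.\ $M$) and its component in the kernel ($W$, resp.\ $V$); the quotient component is in every case the corresponding axiom for $(M,\fg,T)$, which holds by hypothesis. For the kernel component, the key remark is that setting $\theta=\omega=\nu=0$ turns the three maps above into exactly the semidirect product $(M\oplus V,\fg\oplus W,T+T')$ of Section~3, which is an embedding tensor on a Malcev algebra; hence the kernel component of each axiom, with $\theta,\omega,\nu$ reinstated, equals its vanishing semidirect-product value plus a correction. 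Using once more that $(V,W,T')$ is abelian, every such correction is linear in $\theta,\omega,\nu$, and the requirement that it vanish is, for the three axioms respectively, \eqref{2coc01} (a relation on $\omega$ alone, saying that $\omega$ is a Malcev $2$-cocycle on $\fg$ with values in $W$ via $\rho_2$), \eqref{2coc02} (a relation between $\nu$ and $\omega$), and \eqref{2coc03} (a relation tying $\theta$ to $\omega$ and $\nu$, obtained after cancelling the equivariance terms via \eqref{deflm01} and \eqref{deflm02}).

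The step I expect to be the main obstacle is the bookkeeping in expanding the Malcev identity $J(x,y,[x,z])=[J(x,y,z),x]$ for $[\cdot,\cdot]_{\widehat\fg}$. Unlike the other two axioms it is quadratic in the bracket and carries the nested bracket $[x,z]$ inside the Jacobiator, so a direct expansion produces corrections from $\omega$ at several nesting levels together with terms of the shape $[\omega(\cdot,\cdot),\omega(\cdot,\cdot)]_{\widehat\fg}$; the latter vanish because $[W,W]_{\widehat\fg}=0$, and every surviving correction is linear in $\omega$ since $[\fg,W]_{\widehat\fg}\subseteq W$ is governed by $\rho_2$. The delicate part is then to reorganize the surviving terms, using the Sagle form of the Malcev identity for $\fg$ and the Malcev-representation axiom for $\rho_2$, into the single linear expression \eqref{2coc01}, which requires carefully matching the five Jacobiator-type contributions and tracking all signs. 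By contrast, the expansions producing \eqref{2coc02} and \eqref{2coc03} are routine linear computations of exactly the type already carried out in the proofs of Proposition~\ref{prop0} and the preceding Proposition. Assembling \eqref{2coc01}, \eqref{2coc02} and \eqref{2coc03} then shows that $(\theta,\omega,\nu)$ is a $2$-cocycle, as asserted.
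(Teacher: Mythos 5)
Your proposal follows essentially the same route as the paper: the three cocycle identities are extracted as the kernel ($W$- resp.\ $V$-valued) components of the embedding-tensor identity for $\widehat T$, the Sagle/Malcev identity for $[\cdot,\cdot]_{\widehat\fg}$, and the representation identity for $\widehat\rho$, after expanding through the splitting and discarding the terms that vanish because the extension is abelian. The only discrepancy is your assignment of labels: in the paper \eqref{2coc01} is the relation involving $\theta$ (from the embedding-tensor identity), \eqref{2coc02} is the relation in $\omega$ alone (from Sagle's identity), and \eqref{2coc03} is the relation tying $\nu$ to $\omega$ (from the representation axiom), so your three identities are the right ones but cyclically permuted relative to the paper's numbering.
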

\begin{proof}
Since $\widehat{T}$ is an embedding tensor operator, we have the equality
\begin{align*}
[\widehat{T}(\sigma_{0}(m)),\widehat{T}(\sigma_{0}(n))]_{\widehat{\fg}}
-\widehat{T}\big(\widehat{\rho}(\widehat{T}(\sigma_{0}(m)))(\sigma_{0}(n)\big)=0.
\end{align*}
Then we get that
\begin{align*}
&[\theta(m)+\sigma_{1}(T(m)),\theta(n)+\sigma_{1}(T(n))]_{\widehat{\fg}}=\widehat{T}\bigg(\widehat{\rho}\big(\theta(m)+\sigma_{1}(T(m))\big)\sigma_{0}(n)\bigg),\\
\\
&[\theta(m),\theta(n)]_{W}+[\theta(m),\sigma_{1}(T(n))]_{\widehat{\fg}}
+[\sigma_{1}(T(m)),\theta(n)]_{\widehat{\fg}}+[\sigma_{1}(T(m)),\sigma_{1}(T(n))]_{\widehat{\fg}}\\
=&\widehat{T}\bigg(\widehat{\rho}(\theta(m))(\sigma_{0}(n))+\widehat{\rho}\big(\sigma_{1}(T(m))\big)(\sigma_{0}(n)
)\bigg),\\
\\
&\rho_{2}(T(m))(\theta(n))-\rho_{2}(T(n))(\theta(m))+\omega(T(m),(T(n))+\sigma_{1} ([T(m),T(n)]_{\fg})\\
=&T' \big(\nu(T(m),n)\big)+\theta \big(\rho(T(m)(n))\big)+\sigma_{1} T \big(\rho(T(m))(n)\big)-T' \big(\rho_{3}(n)(\theta(m))\big).
\end{align*}
Thus we obtain
\begin{align}\label{2coc01}
&\rho_{2}(T(m))(\theta(n))-\rho_{2}(T(n))(\theta(m))+\omega(T(m),(T(n)) \nonumber \\
&- T' \big(\nu(T(m),n)\big)-\theta \big(\rho(T(m)(n))\big)+ T' \big(\rho_{3}(n)(\theta(m))\big)=0.
\end{align}
Since $\left(\widehat{\fg}, [~,~]_{\widehat{\fg}}\right)$ is an abelian extention of Malcev algebra $\fg$ through $W$, we get
\begin{align*}
&\big[[\sigma_{1}(x),\sigma_{1}(z)]_{\widehat{\fg}},[\sigma_{1}(y),\sigma_{1}(t)]_{\widehat{\fg}}\big]_{\widehat{\fg}}
-\big[\big[[\sigma_{1}(x),\sigma_{1}(y)]_{\widehat{\fg}},\sigma_{1}(z)\big]_{\widehat{\fg}},\sigma_{1}(t)\big]_{\widehat{\fg}}\\
&-\big[\big[[\sigma_{1}(y),\sigma_{1}(z)]_{\widehat{\fg}},\sigma_{1}(t)\big]_{\widehat{\fg}},
\sigma_{1}(x)\big]_{\widehat{\fg}}-\big[\big[[\sigma_{1}(z),\sigma_{1}(t)]_
{\widehat{\fg}},\sigma_{1}(x)\big]_{\widehat{\fg}},\sigma_{1}(y)\big]_{\widehat{\fg}}\\
&-\big[\big[[\sigma_{1}(t),\sigma_{1}(x)]_{\widehat{\fg}},\sigma_{1}(y)\big]_{\widehat{\fg}},
\sigma_{1}(z)\big]_{\widehat{\fg}}\\
=&[\omega(x,z),\omega(y,t)]+\rho_{2}([x,z])(\omega(y,t))-\rho_{2}([y,t])(\omega(x,z))+\omega([x,z],[y,t])
+\sigma_{1}\big([[x,z],[y,t]]\big)\\
&-\rho_{2}(t)\rho_{2}(z)(\omega(x,y))+\rho_{2}(t)\big(\omega([x,y],z)\big)-\omega([[x,y],z],t)-\sigma_{1}\big(\big[[[x,y],z],t\big]\big)\\
&-\rho_{2}(x)\rho_{2}(t)(\omega(y,z))+\rho_{2}(x)\big(\omega([y,z],t)\big)-\omega([[y,z],t],x)-\sigma_{1}\big(\big[[[y,z],t],x\big]\big)\\
&-\rho_{2}(y)\rho_{2}(x)(\omega(z,t))+\rho_{2}(y)\big(\omega([z,t],x)\big)-\omega([[z,t],x],y)-\sigma_{1}\big(\big[[[z,t],x],y\big]\big)\\
&-\rho_{2}(z)\rho_{2}(y)(\omega(t,x))+\rho_{2}(z)\big(\omega([t,x],y)\big)-\omega([[t,x],y],z)-\sigma_{1}\big(\big[[[t,x],y],z\big]\big)\\
=&\rho_{2}([x,z])(\omega(y,t))-\rho_{2}([y,t])(\omega(x,z))+\omega([x,z],[y,t])-\rho_{2}(t)\rho_{2}(z)(\omega(x,y))\\
&+\rho_{2}(t)\big(\omega([x,y],z)\big)-\omega([[x,y],z],t)-\rho_{2}(x)\rho_{2}(t)(\omega(y,z))+\rho_{2}(x)\big(\omega([y,z],t)\big)\\
&-\omega([[y,z],t],x)-\rho_{2}(y)\rho_{2}(x)(\omega(z,t))+\rho_{2}(y)\big(\omega([z,t],x)\big)-\omega([[z,t],x],y)\\
&-\rho_{2}(z)\rho_{2}(y)(\omega(t,x))+\rho_{2}(z)\big(\omega([t,x],y)\big)-\omega([[t,x],y],z)\\
=&~0.
\end{align*}
Then we obtain
\begin{align}\label{2coc02}
&\rho_{2}([x,z])(\omega(y,t))+\rho_{2}(t)\big(\omega([[x,y],z])\big)+\rho_{2}(x)\big(\omega([[y,z],t])\big)+
\rho_{2}(y)\big(\omega([[z,t],x])\big)
\nonumber \\
&+\rho_{2}(z)\big(\omega([[t,x],y])\big)-\rho_{2}([y,t])(\omega(x,z))-\rho_{2}(t)\rho_{2}(z)(\omega(x,y))-
\rho_{2}(x)\rho_{2}(t)(\omega(y,z)) \nonumber \\
&-\rho_{2}(y)\rho_{2}(x)(\omega(z,t))-\rho_{2}(z)\rho_{2}(y)(\omega(t,x))+\omega([x,z],[y,t])-\omega \big([[x,y],z],t\big) \nonumber \\
&-\omega \big([[y,z],t],x\big)-\omega \big([[z,t],y],x\big)-\omega \big([[t,x],y],z\big)=0.
\end{align}
Since $\left(\widehat{M}, \widehat{\rho}\right)$ is a representation of $\left(\widehat{\fg}, [~,~]_{\widehat{\fg}}\right)$, we have the equality
\begin{align*}
&\widehat{\rho}([[\sigma_{1}(x), \sigma_{1}(y)]_{\widehat{\fg}}, \sigma_{1}(z)]_{\widehat{\fg}})(\sigma_{0}(m))-\widehat{\rho}(\sigma_{1}(x)) \widehat{\rho}(\sigma_{1}(y)) \widehat{\rho}(\sigma_{1}(z))(\sigma_{0}(m))\\
&+\widehat{\rho}(\sigma_{1}(z)) \widehat{\rho}(\sigma_{1}(x)) \widehat{\rho}(\sigma_{1}(y))(\sigma_{0}(m))-\widehat{\rho}(\sigma_{1}(y)) \widehat{\rho}([\sigma_{1}(z), \sigma_{1}(x)]_{\widehat{\fg}})(\sigma_{0}(m))\\
&+\widehat{\rho}([\sigma_{1}(y), \sigma_{1}(z)]_{\widehat{\fg}}) \widehat{\rho}(\sigma_{1}(x))(\sigma_{0}(m))\\
=&\rho_{3}(m)\big(\rho_{2}(z)(\omega(x,y))\big)-\rho_{3}(m)\big(\omega([x,y],z)\big)+\nu([[x,y],z],m)\\
&+\sigma_{0}\big(\rho([[x,y],z])(m)\big)-\rho_{1}(x)\rho_{1}(y)(\nu(z,m))-\rho_{1}(x)\big(\nu(y,\rho(z)(m))\big)\\
&-\nu\big(x,\rho(y)\rho(z)(m)\big)-\sigma_{0}\big(\rho(x)\rho(y)\rho(z)(m)\big)+\rho_{1}(z)\rho_{1}(x)(\nu(y,m))\\
&+\rho_{1}(z)\big(\nu(x,\rho(y)(m))\big)+\nu\big(z,\rho(x)\rho(y)(m)\big)
+\sigma_{0}\big(\rho(z)\rho(x)\rho(y)(m)\big)\\
&+\rho_{1}(y)\big(\rho_{3}((m)\omega(z,x))\big)-\rho_{1}(y)\big(\nu([z,x],m)\big)-\nu\big(y,\rho([z,x])(m)\big)\\
&-\sigma_{0}\big(\rho(y)\rho([z,x])(m)\big)-\rho_{3}(\rho(x)(m))(\omega(y,z))+\rho_{1}([y,z])(\nu(x,m))\\
&+\nu([y,z],\rho(x)(m))+\sigma_{0}\big(\rho([y,z])\rho(x)(m)\big)\\
=&\rho_{3}(m)\big(\rho_{2}(z)(\omega(x,y))\big)-\rho_{3}(m)\big(\omega([x,y],z)\big)+\nu([[x,y],z],m)\\
&-\rho_{1}(x)\rho_{1}(y)(\nu(z,m))-\rho_{1}(x)\big(\nu(y,\rho(z)(m))\big)-\nu\big(x,\rho(y)\rho(z)(m)\big)\\
&+\rho_{1}(z)\rho_{1}(x)(\nu(y,m))+\rho_{1}(z)\big(\nu(x,\rho(y)(m))\big)+\nu\big(z,\rho(x)\rho(y)(m)\big)\\
&+\rho_{1}(y)\big(\rho_{3}((m)\omega(z,x))\big)-\rho_{1}(y)\big(\nu([z,x],m)\big)-\nu\big(y,\rho([z,x])(m)\big)\\
&-\rho_{3}(\rho(x)(m))(\omega(y,z))+\rho_{1}([y,z])(\nu(x,m))+\nu([y,z],\rho(x)(m))\\
=&~0.
\end{align*}
Thus, we get
\begin{align}\label{2coc03}
&\rho_{3}(m)\big(\rho_{2}(z)(\omega(x,y))\big)-\rho_{3}(m)\big(\omega([x,y],z)\big)+\nu([[x,y],z],m) \nonumber \\
&-\rho_{1}(x)\rho_{1}(y)(\nu(z,m))-\rho_{1}(x)\big(\nu(y,\rho(z)(m))\big)-\nu\big(x,\rho(y)\rho(z)(m)\big) \nonumber \\
&+\rho_{1}(z)\rho_{1}(x)(\nu(y,m))+\rho_{1}(z)\big(\nu(x,\rho(y)(m))\big)+\nu\big(z,\rho(x)\rho(y)(m)\big) \nonumber \\
&+\rho_{1}(y)\big(\rho_{3}((m)\omega(z,x))\big)-\rho_{1}(y)\big(\nu([z,x],m)\big)-\nu\big(y,\rho([z,x])(m)\big) \nonumber \\
&-\rho_{3}(\rho(x)(m))(\omega(y,z))+\rho_{1}([y,z])(\nu(x,m))+\nu([y,z],\rho(x)(m)).
\end{align}
Therefore, we obtain that $(\theta, \omega, \nu)$ is a 2-cocycle of $(M, \fg, T)$ with coefficients in $(V, W, T')$. This complete the proof.
\end{proof}

Now we define the embedding tensor on Malcev algebra structure on $(M \oplus V, \fg \oplus W, \widehat{T})$ using the 2-cocycle given above. More precisely, we have
\begin{align}\label{eq52}
\left\{\begin{aligned}
\widehat{T}(m+v) & \triangleq T(m)+\theta(m)+T'(v), \\
{\left[x+w, x^{\prime}+w^{\prime}\right] } & \triangleq\left[x, x^{\prime}\right]+\omega\left(x, x^{\prime}\right)+\rho_{2}(x)\left(w^{\prime}\right)-\rho_{2}\left(x^{\prime}\right)(w), \\
\widehat{\rho}(x+w)(m+v) & \triangleq \rho(x)(m)+\nu(x, m)+\rho_{1}(x)(v)-\rho_{3}(m)(w),
\end{aligned}\right.
\end{align}
for all $x, y, z,t \in \fg, v \in V, w \in W$ and $m \in M$. Thus any extension $\widehat{E}$ is isomorphic to
\begin{equation}
\xymatrix{
   0  \ar[r]^{} & {V} \ar[d]_{T'} \ar[r]^{i_{0}} & {M \oplus V} \ar[d]_{\widehat{T}} \ar[r]^{p_{0}} &{M} \ar[d]_{T} \ar[r]^{} & 0 \\
   0 \ar[r]^{} & {W} \ar[r]^{i_{1}} & {\fg\oplus W} \ar[r]^{p_{1}} &{\fg} \ar[r]^{} & 0
   }
\end{equation}
where the embedding tensor on Malcev algebra structure is given as in \eqref{eq52}.
\begin{theorem}
There is a one-to-one correspondence between equivalence classes of abelian extensions and the second cohomology group $\mathbf{H}^{2}((M, \fg, T),(V, W, T'))$ which is defined by the spaces of 2-cocycles mod 2-coboundaries satisfying \eqref{55},  \eqref{56} and  \eqref{57} below.
\end{theorem}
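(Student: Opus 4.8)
The plan is to construct two mutually inverse maps between the set of equivalence classes of abelian extensions of $(M,\fg,T)$ by $(V,W,T')$ and the group $\mathbf{H}^{2}\big((M,\fg,T),(V,W,T')\big)$, using the 2-cocycle machinery already set up. First I would go from extensions to cohomology. Given an abelian extension $\widehat{E}$, choose a splitting $\sigma=(\sigma_{0},\sigma_{1})$; the previous theorem attaches to it a 2-cocycle $(\theta,\omega,\nu)$ satisfying \eqref{2coc01}, \eqref{2coc02} and \eqref{2coc03}. I must check this class is independent of $\sigma$: if $\sigma'=(\sigma_{0}',\sigma_{1}')$ is another splitting, then $p_{0}\circ(\sigma_{0}-\sigma_{0}')=0$ and $p_{1}\circ(\sigma_{1}-\sigma_{1}')=0$, so $b_{0}:=\sigma_{0}-\sigma_{0}'\colon M\to V$ and $b_{1}:=\sigma_{1}-\sigma_{1}'\colon\fg\to W$ are well-defined linear maps. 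Substituting $\sigma_{i}=\sigma_{i}'+b_{i}$ into the defining formulas for $\theta,\omega,\nu$ and using that $(V,W,T')$ is abelian with trivial action, one computes that $(\theta,\omega,\nu)-(\theta',\omega',\nu')$ is precisely the coboundary $\delta(b_{0},b_{1})$; in fact this computation is what \emph{defines} the relevant coboundary operator and pins down the conditions \eqref{55}, \eqref{56} and \eqref{57} as the components of $\delta(b_{0},b_{1})$.

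Next I would show equivalent extensions give cohomologous cocycles. If $F\colon\widehat{E}\to\widetilde{E}$ is an equivalence and $\sigma$ is a splitting of $\widehat{E}$, then $F\circ\sigma$ is a splitting of $\widetilde{E}$, and since $F$ restricts to the identity on $V$ and on $W$, the cocycle built from $F\circ\sigma$ equals the cocycle built from $\sigma$; combined with the previous step, the class $[(\theta,\omega,\nu)]$ depends only on the equivalence class of $\widehat{E}$. This yields a well-defined map $\Theta$ from equivalence classes of abelian extensions to $\mathbf{H}^{2}\big((M,\fg,T),(V,W,T')\big)$.

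For the reverse map, given a 2-cocycle $(\theta,\omega,\nu)$ I would equip $M\oplus V$ and $\fg\oplus W$ with the operations $\widehat{T}$, $[~,~]$, $\widehat{\rho}$ of \eqref{eq52}. The main obstacle is verifying that $(M\oplus V,\fg\oplus W,\widehat{T})$ is genuinely an embedding tensor on a Malcev algebra: the bracket on $\fg\oplus W$ satisfies Sagle's identity exactly because of \eqref{2coc02}; $\widehat{\rho}$ is a representation of $\fg\oplus W$ on $M\oplus V$ exactly because of \eqref{2coc03} together with the hypotheses that $\rho,\rho_{1}$ are representations and the compatibility conditions \eqref{deflm03}, \eqref{deflm04}, \eqref{deflm05}; and $\widehat{T}$ satisfies \eqref{def:et} exactly because of \eqref{2coc01} together with \eqref{deflm01}, \eqref{deflm02} and the embedding-tensor identity for $T$. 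This is a long but mechanical check that essentially runs the computations in the proof of the semidirect-product proposition and the preceding theorem in reverse. The obvious inclusions and projections then make $\widehat{E}_{(\theta,\omega,\nu)}$ an abelian extension. I would then check that cohomologous cocycles give equivalent extensions: if $(\theta,\omega,\nu)-(\theta',\omega',\nu')=\delta(b_{0},b_{1})$, the map $F$ defined by $F(m+v)=m+\big(v+b_{0}(m)\big)$ on $M\oplus V$ and $F(x+w)=x+\big(w+b_{1}(x)\big)$ on $\fg\oplus W$ fixes $V$ and $W$, intertwines $\widehat{T}$ with $\widehat{T}'$, the two brackets, and the two representations, hence is an equivalence; conversely any two cocycles giving equivalent extensions differ by such a $\delta(b_{0},b_{1})$ since an equivalence fixing $V$ and $W$ is exactly the ambiguity in the choice of section. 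This gives a well-defined map $\Psi$ from $\mathbf{H}^{2}$ to equivalence classes.

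Finally I would verify $\Theta$ and $\Psi$ are mutually inverse. For $\Psi\circ\Theta=\mathrm{id}$: the discussion following the previous theorem already shows any extension $\widehat{E}$ is isomorphic to the standard extension $\widehat{E}_{(\theta,\omega,\nu)}$ built from its own cocycle via $\sigma$, so $\Psi(\Theta[\widehat{E}])=[\widehat{E}]$. For $\Theta\circ\Psi=\mathrm{id}$: in $\widehat{E}_{(\theta,\omega,\nu)}$ take the canonical section $\sigma_{0}(m)=m+0$, $\sigma_{1}(x)=x+0$; plugging into the definitions of $\theta,\omega,\nu$ recovers $(\theta,\omega,\nu)$ on the nose, so $\Theta(\Psi[(\theta,\omega,\nu)])=[(\theta,\omega,\nu)]$. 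Hence $\Theta$ and $\Psi$ are inverse bijections, establishing the claimed one-to-one correspondence. The only genuinely delicate point is the bookkeeping in the first paragraph, namely extracting the precise coboundary conditions \eqref{55}, \eqref{56}, \eqref{57} so that "change of section" and "coboundary" match exactly; everything else is routine once the cocycle and coboundary conditions are in hand.
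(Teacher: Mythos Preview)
Your proposal is correct and follows essentially the same approach as the paper: both directions of the correspondence are established by matching change-of-section/equivalence data $(b_{0},b_{1})$ with the coboundary conditions \eqref{55}, \eqref{56}, \eqref{57}, and the inverse construction uses the standard model \eqref{eq52}. Your write-up is in fact more careful than the paper's, which leaves the independence-of-splitting step and the explicit verification that $\Theta$ and $\Psi$ are mutually inverse to the reader.
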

\begin{proof}
We have know from the above discussion that abelian extensions of embedding tensor on Malcev algebra are correspond to 2-cocycle and vice verse. Let ${E}^{\prime}$ be another abelian extension determined by the 2-cocycle $\left(\theta^{\prime}, \omega^{\prime}, \nu^{\prime}\right)$. We are going to show that ${E}$ and ${E}^{\prime}$ are equivalent if and only if 2-cocycles $(\theta, \omega, \nu)$ and $\left(\theta^{\prime}, \omega^{\prime}, \nu^{\prime}\right)$ are in the same cohomology class.

Since $F$ is an equivalence of abelian extensions, there exist two linear maps $b_{0}: M \longrightarrow V$ and $b_{1}: \fg \longrightarrow W$ such that
$$
F_{0}(m+v)=m+b_{0}(m)+v, \quad F_{1}(x+w)=x+b_{1}(x)+w.
$$
First, by the equality
$$
\widehat{T}^{\prime} \circ F_{0}(m)=F_{1} \circ \widehat{T}(m).
$$
we have
\begin{align}\label{55}
\theta(m)-\theta^{\prime}(m)=T' b_{0}(m)-b_{1}(f(m)).
\end{align}
Furthermore, we have
$$
F_{1}([x, y]+\omega(x, y))=\left[F_{1}(x), F_{1}(y)\right]^{\prime},
$$
which implies that
\begin{align}\label{56}
\omega(x, y)-\omega^{\prime}(x, y)=\rho_{1}(x) b_{1}(y)-\rho_{1}(y) b_{1}(x)-b_{1}([x, y]).
\end{align}
Similarly, by the equality
$$
F_{1}([x, m]+\nu(x, m))=\left[F_{1}(x), F_{0}(m)\right]^{\prime},
$$
we get
\begin{align}\label{57}
\nu(x, m)-\nu^{\prime}(x, m)=\rho_{1}(x) b_{0}(m)-\rho_{2}(m) b_{1}(x)-b_{0}([x, m]) .
\end{align}
By \eqref{55},  \eqref{56} and  \eqref{57}, we deduce that $(\psi, \omega, \nu)-\left(\psi^{\prime}, \omega^{\prime}, \nu^{\prime}\right)=D\left(b_{0}, b_{1}\right)$. Thus, they are in the same cohomology class.

Conversely, if $(\theta, \omega, \nu)$ and $\left(\theta^{\prime}, \omega^{\prime}, \nu^{\prime}\right)$ are in the same cohomology class, assume that $(\theta, \omega, \nu)-\left(\theta^{\prime}, \omega^{\prime}, \nu^{\prime}\right)=D\left(b_{0}, b_{1}\right)$. Then we define $\left(F_{0}, F_{1}\right)$ by
$$
F_{0}(m+v)=m+b_{0}(m)+v, \quad F_{1}(x+w)=x+b_{1}(x)+w.
$$
Similar as the above proof, we can show that $\left(F_{0}, F_{1}\right)$ is an equivalence. We omit the details.
\end{proof}

\section{Infinitesimal deformations}
In this section, we study infinitesimal deformations of embedding tensors on Malcev algebra. For infinitesimal  deformations of associative algebras and Lie algebras, see \cite{Ge,NR}.

Let $(M, \fg, T)$ be an embedding tensor and $\theta: M \rightarrow \fg, \omega: \wedge^{2} \fg \rightarrow \fg, \nu: \fg \otimes M \rightarrow M$ be linear maps. Consider a $\lambda$-parametrized family of linear operations:
$$
\begin{aligned}
T_{\lambda}(m) & \triangleq T(m)+\lambda \theta(m), \\
{[x, y]_{\lambda} } & \triangleq[x, y]+\lambda \omega(x, y), \\
\rho_{\lambda}(x)(m) & \triangleq \rho(x)(m)+\lambda \nu(x)(m) .
\end{aligned}
$$
If $\left(M_{\lambda}, \fg_{\lambda}, T_{\lambda}\right)$ forms an embedding tensor, then we say that $(\theta, \omega, \nu)$ generates a 1-parameter infinitesimal deformation of $(M, \fg, T)$.

\begin{theorem} With the notations above, $(\theta, \omega, \nu)$ generates a 1-parameter infinitesimal deformation of $(M, \fg, T)$ if and only if the following conditions hold:

(i) $(\theta, \omega, \nu)$ is a 2-cocycle of $(M, \fg, T)$ with coefficients in the adjoint representation (satisfying (16), (19), (23)) and the following conditions
 (17),  (20),   (24) hold;

(ii) $(M, \fg, \theta)$ is an embedding tensor on Malcev algebra structure with bracket $\omega$ on $\fg$ and action of $\fg$ on $M$ by $\nu$.
\end{theorem}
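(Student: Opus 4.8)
The plan is to substitute the $\lambda$-parametrized operations $T_\lambda$, $[~,~]_\lambda$ and $\rho_\lambda$ into the three defining axioms of an embedding tensor on a Malcev algebra and then to compare coefficients of the powers of $\lambda$. Recall that $(M_\lambda,\fg_\lambda,T_\lambda)$ is an embedding tensor on a Malcev algebra if and only if three conditions hold: (a) $(\fg,[~,~]_\lambda)$ is a Malcev algebra, i.e.\ the Malcev identity holds for $[~,~]_\lambda$ (skew-symmetry being built into the hypothesis $\omega:\wedge^2\fg\to\fg$); (b) $(M,\rho_\lambda)$ is a representation of $(\fg,[~,~]_\lambda)$, i.e.\ identity \eqref{def:rep} holds with $\rho$ and $[~,~]$ replaced by $\rho_\lambda$ and $[~,~]_\lambda$; and (c) $T_\lambda$ is an embedding tensor operator, i.e.\ $[T_\lambda(m),T_\lambda(n)]_\lambda=T_\lambda\big(\rho_\lambda(T_\lambda(m))(n)\big)$. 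Since $T_\lambda$, $[~,~]_\lambda$ and $\rho_\lambda$ are affine in $\lambda$, and since the Malcev identity and the representation axiom \eqref{def:rep} are cubic in the operations while the embedding tensor condition \eqref{def:et} also has total degree $3$ (two factors of $T$ and one of $[~,~]$ on the left, two of $T$ and one of $\rho$ on the right), each of (a), (b) and (c) becomes, after substitution, a polynomial identity in $\lambda$ of degree at most $3$; it therefore holds identically in $\lambda$ if and only if its coefficients of $\lambda^0,\lambda^1,\lambda^2$ and $\lambda^3$ all vanish.

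First I would record the $\lambda^0$ coefficients: in (a), (b) and (c) these are, respectively, the Malcev identity for $[~,~]$, the representation axiom for $\rho$, and the embedding tensor condition \eqref{def:et} for $T$, all of which hold by the hypothesis that $(M,\fg,T)$ is an embedding tensor on a Malcev algebra. Next, I would collect the $\lambda^1$ coefficients: the linear term of (c) is the mixed $(\theta,\omega,\nu)$-relation which is equation (16); the linear term of (a) is the Malcev $2$-cocycle equation (19) for $\omega$; and the linear term of (b) is equation (23) for $\nu$. By the definition of the cohomology of $(M,\fg,T)$ with coefficients in the adjoint representation, these three equations together assert exactly that $(\theta,\omega,\nu)$ is a $2$-cocycle.

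I would then extract the $\lambda^2$ coefficients: the cross terms of (c), (a) and (b) (each of them bilinear in the deformation cochains and linear in the undeformed operations) produce respectively the three further relations (17), (20) and (24) appearing in item (i). Finally, the $\lambda^3$ coefficients involve only $\theta$, $\omega$ and $\nu$: from (a) one gets the Malcev identity for $\omega$ alone, so $(\fg,\omega)$ is a Malcev algebra; from (b) one gets identity \eqref{def:rep} for $\nu$ relative to $\omega$, so $(M,\nu)$ is a representation of $(\fg,\omega)$; and from (c) one gets $\omega(\theta(m),\theta(n))=\theta\big(\nu(\theta(m))(n)\big)$, i.e.\ $\theta$ is an embedding tensor operator for the Malcev algebra $(\fg,\omega)$ with action $\nu$. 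Together these three statements are precisely item (ii). Consequently $(M_\lambda,\fg_\lambda,T_\lambda)$ is an embedding tensor on a Malcev algebra for every $\lambda$ if and only if (i) and (ii) both hold, which is the claim; the converse direction is immediate, since if (i) and (ii) hold then all coefficients of $\lambda^0,\dots,\lambda^3$ vanish.

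The main obstacle is computational bookkeeping rather than anything conceptual: one must carefully expand the cubic Malcev identity — or, more conveniently, the equivalent Sagle identity recorded in the Remark — and the cubic representation axiom \eqref{def:rep} under $[~,~]\mapsto[~,~]+\lambda\omega$ and $\rho\mapsto\rho+\lambda\nu$, and then verify term by term that the $\lambda^1$ coefficients coincide with (16), (19), (23), that the $\lambda^2$ coefficients coincide with (17), (20), (24), and that the $\lambda^3$ coefficients assemble into the three axioms making $(M,\fg,\theta)$ an embedding tensor on $(\fg,\omega)$. It is worth noting that item (ii) is internally consistent with this picture: requiring $\theta$ to be an embedding tensor operator for $(\fg,\omega)$ with action $\nu$ already presupposes that $(\fg,\omega)$ is a Malcev algebra and that $(M,\nu)$ is a representation of it, and these are exactly the $\lambda^3$-equations obtained from (a) and (b).
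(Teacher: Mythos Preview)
Your proposal is correct and follows essentially the same approach as the paper: substitute $T_\lambda$, $[~,~]_\lambda$, $\rho_\lambda$ into the three defining axioms (embedding tensor condition, Sagle's Malcev identity, representation axiom), expand as cubic polynomials in $\lambda$, and identify the coefficients of $\lambda^1,\lambda^2,\lambda^3$ with equations (16)/(19)/(23), (17)/(20)/(24), and (18)/(21)/(25) respectively. Your remark that item~(ii) packages the three $\lambda^3$-equations (18), (21), (25) into the single statement ``$(M,\fg,\theta)$ is an embedding tensor on $(\fg,\omega)$ with action $\nu$'' is exactly how the paper concludes.
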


\begin{proof}
If $\left(M_{\lambda}, \fg_{\lambda}, T_{\lambda}\right)$ is an embedding tensor on Malcev algebra, then $T_{\lambda}$ is an embedding tensor operator.
Thus we have
\begin{align*}
& [T_{\lambda}(m), T_{\lambda}(n)]_{\lambda}-T_{\lambda}\big(\rho_{\lambda}(T_{\lambda}(m))(n)\big)\\
=&[T(m)+\lambda \theta(m), T(n)+\lambda \theta(n)]_{\lambda}-T_{\lambda}\big(\rho_{\lambda}(T(m)+\lambda \theta(m))(n)\big)\\
=&[T(m)+\lambda \theta(m), T(n)+\lambda \theta(n)]+\lambda \omega(T(m)+\lambda \theta(m), T(n)+\lambda \theta(n))\\
&-T_{\lambda}\big(\rho(T(m)+\lambda \theta(m))(n)+\lambda \nu(T(m)+\lambda \theta(m),n)\big)\\
=&[T(m), T(n)]+[T(m),\lambda \theta(n)]+[\lambda \theta(m),T(n)]+[\lambda \theta(m),\lambda \theta(n)]\\
&+\lambda \omega(T(m),T(n))+\lambda \omega(T(m),\lambda \theta(n))+\lambda \omega(\lambda \theta(m), T(n))+\lambda \omega(\lambda \theta(m), \lambda \theta(n))\\
&-T\big(\rho(T(m))(n)\big)-T\big(\rho(\lambda \theta(m))(n)\big)-T\big(\lambda \nu(T(m), n)\big)-T\big(\lambda \nu(\lambda \theta(m), n)\big)\\
&-\lambda \theta \big(\rho(T(m))(n)\big)-\lambda \theta \big(\rho(\lambda \theta(m))(n)\big)-\lambda \theta \big(\lambda \nu(T(m),n)\big)-\lambda \theta \big(\lambda \nu(\lambda \theta(m),n)\big),
\end{align*}
which implies that
\begin{align}
\label{(16)}
&[T(m), \theta(n)]+[\theta(m), T(n)]+\omega(T(m),T(n))-T\big(\rho(\theta(m))(n)\big) \nonumber \\
&-T\big(\nu(T(m),n)\big)-\theta \big(\rho(T(m))(n)\big)=0,\\
\label{(17)}
&[\theta(m), \theta(n)]+\omega(T(m),\theta(n))+\omega(\theta(m), T(n))-T\big(\nu(\theta(m),n)\big) \nonumber \\
&-\theta \big(\rho(\theta(m))(n)\big)-\theta \big(\nu(T(m),n)\big)=0,\\
\label{(18)}
&\omega(\theta(m),\theta(n))-\theta \big(\nu(\theta(m),n)\big)=0.
\end{align}
Since $\fg_{\lambda}$ is a Malcev algebra, 
then we have
\begin{align*}
&\big[[x,z]_{\lambda},[y,t]_{\lambda}\big]_{\lambda}-\big[\big[[x,y]_{\lambda},z\big]_{\lambda},t\big]_{\lambda}-
\big[\big[[y,z]_{\lambda},t\big]_{\lambda},x\big]_{\lambda}\\
&-\big[\big[[z,t]_{\lambda},x\big]_{\lambda},y\big]_{\lambda}-\big[\big[[t,x]_{\lambda},y\big]_{\lambda},z\big]_{\lambda}=0,
\end{align*}
which implies that
\begin{align}
 &\omega([x, z],[y, t])-\omega([[x, y], z], t)-\omega([[y, z], t], x)-\omega([[z, t], x], y) \nonumber \\
&+\omega([[t, x], y], z)+[\omega(x, z),[y, t]]-[\omega([x, y], z), t]-[\omega([y, z], t), x] \nonumber \\
&-[\omega([z, t], x), y]-[\omega([t, x], y), z]+[[x, z], \omega(y, t)]-[[\omega(x, y), z], t] \nonumber \\
\label{(19)}&-[[\omega(y, z), t], x]-[[\omega(z, t), x], y]-[[\omega(t, x), y], z]=0, \\[1em]
 &\omega(\omega(x, z),[y, t])-\omega([\omega(x, y), z], t)-\omega([\omega(y, z), t], x)-\omega(\omega([z, t], x), y)\nonumber \\
&+\omega(\omega([t, x], y), z) +\omega([x, z], \omega(y, t))-\omega([\omega(x, y), z], t)-\omega([\omega(y, z), t], x)\nonumber \\
&-\omega([\omega(z, t), x], y)+\omega([\omega(t, x), y], z)+[\omega(x, z), \omega(y, t)]-[\omega(\omega(x, y), z), t] \nonumber \\
\label{(20)}&-[\omega(\omega(y, z), t), x]-[\omega(\omega(z, t), x), y]-[\omega(\omega(t, x), y), z] =0, \\[1em]
&\omega(\omega(x, z), \omega(y, t))-\omega(\omega(\omega(x, y), z), t)-\omega(\omega(\omega(y, z), t), x)-\omega(\omega(\omega(z, t), x), y) \nonumber \\
\label{(21)}&-\omega(\omega(\omega(t, x), y), z)=0.
\end{align}
Since $\left(M_{\lambda}, \rho_{\lambda}\right)$ is a representation of $\fg_{\lambda}$, we have
\begin{align*}
&\rho_{\lambda}([[x, y], z])(m)-\rho_{\lambda}(x) \rho_{\lambda}(y) \rho_{\lambda}(z)(m)+\rho_{\lambda}(z) \rho_{\lambda}(x) \rho_{\lambda}(y)(m)\nonumber \\
&-\rho_{\lambda}(y) \rho_{\lambda}([z, x])(m) +\rho_{\lambda}([y, z]) \rho_{\lambda}(x)(m)=0,
\end{align*}
which implies that
\begin{align}
&\nu([[x, y], z], m)-\nu(x, \rho(y) \rho(z)(m))+\nu(z, \rho(x) \rho(y)(m))-\nu(y, \rho([z, x])(m)) \nonumber \\
&+\nu([y, z], \rho(x)(m))+\rho(\omega([x, y], z))(m)-\rho(x) \nu(y, \rho(z)(m))+\rho(z) \nu(x, \rho(y)(m)) \nonumber \\
&-\rho(y) \nu([z, x], m)+\rho(\omega(y, z)) \rho(x)(m)+\rho([\omega(x, y), z])(m)-\rho(x) \rho(y) \nu(z, m)\nonumber\\
\label{(23)}&+ \rho(z) \rho(x) \nu(y, m)-\rho(y) \rho(\omega(z, x))(m)+\rho([y, z]) \nu(x, m)=0,\\[1em]
&\nu(\omega([x, y], z), m)-\nu(x, \nu(y, \rho(z)(m)))+\nu(z, \nu(x, \rho(y)(m)))-\nu(y, \nu([z, x],m)) \nonumber \\
&+\nu(\omega(y, z), \rho(x)(m))+\nu([\omega(x, y), z],m)-\nu(x, \rho(y) \nu(z, m))+\nu(z, \rho(x) \nu(y, m))\nonumber\\
&-\nu(y, \rho(\omega(z, x))(m))+\nu([y, z],\nu(x,m))+\rho(\omega(\omega(x, y), z))(m)-\rho(x) \nu(y, \nu(z,m)) \nonumber \\
\label{(24)}&+\rho(z) \nu(x, \nu(y,m))-\rho(y) \nu(\omega(z, x),m)+\rho(\omega(y, z)) \nu(x, m)=0,\\[1em]
&\nu(\omega(\omega(x, y), z), m)-\nu(x, \nu(y, \nu(z, m)))+\nu(z, \nu(x, \nu(y,m))) \nonumber \\
\label{(25)}&-\nu(y, \nu(\omega(z, x), m))+\nu(\omega(y, z), \nu(x,m))=0.
\end{align}
By \eqref{(16)}, \eqref{(17)}, \eqref{(19)}, \eqref{(20)}, \eqref{(23)} and \eqref{(24)}, we find that $(\theta, \omega, \nu)$ is a 2-cocycle of $(M, \fg, T)$ with the coefficients in the adjoint representation. Furthermore, by \eqref{(18)},\eqref{(21)} and \eqref{(25)}, $(M, \fg, \theta)$ with bracket $\omega$ and $\nu$ is an embedding tensor on Malcev algebra.
\end{proof}

Now we introduce the notion of Nijenhuis operators which gives trivial deformations.

Let $(M, \fg, T)$ be an embedding tensor on Malcev algebra and $N=\left(N_{0}, N_{1}\right)$ be a pair of linear maps $N_{0}: M \rightarrow M $ and $N_{1}: \fg \rightarrow \fg$ such that $T \circ N_{0}=N_{1} \circ T$. Define an exact 2-cochain
$$
(\omega, \nu, \theta)=D\left(N_{0}, N_{1}\right).
$$
by differential $D$ discussed above, i.e.,
$$
\begin{aligned}
\theta(m) & =T \circ N_{0}(m)-N_{1} \circ T(m), \\
\omega(x, y)=[x, y]_{N} & =\left[N_{1} x, y\right]+\left[x, N_{1} y\right]-N_{1}[x, y] ,\\
\nu(x, m)=[x, m]_{N} & =\left[N_{1} x, m\right]+\left[x, N_{0} m\right]-N_{0}[x, m].
\end{aligned}
$$

\begin{definition}
 A pair of linear maps $N=\left(N_{0}, N_{1}\right)$ is called a Nijenhuis operator if for all $x, y \in \fg$ and $m \in M$, the following conditions are satisfied:

(i) $\operatorname{Im}\left(T \circ N_{1}-N_{1} \circ T\right) \in \operatorname{Ker} N_{1}$;

(ii) $N_{1}[x, y]_{N}=\left[N_{1} x, N_{1} y\right]$;

(iii) $N_{0}[x, m]_{N}=\left[N_{1} x, N_{0} m\right]$.
\end{definition}

\begin{definition}
A deformation is said to be trivial if there exists a pair of linear maps $N_{0}: M \rightarrow M , N_{1}: \fg \rightarrow \fg$, such that $\left(T_{0}, T_{1}\right)$ is a morphism from $\left(M_{\lambda}, \fg_{\lambda}, T_{\lambda},[\cdot, \cdot]_{\lambda}\right)$ to $(M, \fg, T,[\cdot, \cdot])$, where $T_{0}=\mathrm{id}+\lambda N_{0}, T_{1}=\mathrm{id}+\lambda N_{1}$.
\end{definition}

Note that $\left(T_{0}, T_{1}\right)$ is a morphism means that
\begin{align}
\label{(26)}
T\circ T_{0}(m) & =T_{1} \circ T_{\lambda}(m), \\
\label{(27)}
T_{1}[x, y]_{\lambda} & =\left[T_{1} x, T_{1} y\right], \\
\label{(28)}
T_{0}[x, m]_{\lambda} & =\left[T_{1} x, T_{0} m\right].
\end{align}

Now we consider what conditions that $N=\left(N_{0}, N_{1}\right)$ should satisfy. From \eqref{(26)}, we have
$$
T(m)+\lambda T \circ N_{0}(m)=\left(\mathrm{id}+\lambda N_{1}\right)(T(m)+\lambda \theta(m))=T(m)+\lambda N_{1}(T(m))+\lambda \theta(m)+\lambda^{2} N_{1} \theta(m) .
$$
Thus, we have
\begin{align*}
&\theta(m)=\left(T \circ N_{0}-N_{1} \circ T\right)(m), \\
&N_{1} \theta(m)=0.
\end{align*}
It follows that $(N_0,N_1)$ must satisfy the following condition:
\begin{align}
N_{1}\left(T \circ N_{0}-N_{1} \circ T\right)=0.
\end{align}
For \eqref{(27)}, the left hand side is equal to
$$
[x, y]+\lambda N_{1}([x, y])+\lambda \omega(x, y)+\lambda^{2} N_{1} \omega(x, y),
$$
and the right hand side is equal to
$$
[x, y]+\lambda\left[N_{1} x, y\right]+\lambda\left[x, N_{1} y\right]+\lambda^{2}\left[N_{1} x, N_{1} y\right].
$$
Thus, \eqref{(27)} is equivalent to
$$
\begin{aligned}
& \omega(x, y)=\left[N_{1} x, y\right]+\left[x, N_{1} y\right]-N_{1}[x, y], \\
& N_{1} \omega(x, y)=\left[N_{1} x, N_{1} y\right].
\end{aligned}
$$
It follows that $N_1$ must satisfy the following condition:
\begin{align}
\left[N_{1} x, N_{1} y\right]-N_{1}\left[N_{1} x, y\right]-N_{1}\left[x, N_{1} y\right]+N_{1}^{2}[x, y]=0.
\end{align}
For \eqref{(28)}, the left hand side is equal to
$$
[x, m]+\lambda \nu(x, m)+\lambda N_{0}([x, m])+\lambda^{2} N_{0} \nu(x, m),
$$
and the right hand side is equal to
$$
[x, m]+\lambda\left[N_{1}(x), m\right]+\lambda\left[x, N_{0}(m)\right]+\lambda^{2}\left[N_{1}(x), N_{0}(m)\right].
$$
Thus, \eqref{(28)} is equivalent to
$$
\begin{gathered}
\nu(x, m)=\left[N_{1} x, m\right]+\left[x, N_{0} m\right]-N_{0}[x, m], \\
N_{0} \nu(x, m)=\left[N_{1} x, N_{0} m\right]+N_{2}(x, \theta(m)) .
\end{gathered}
$$
It follows that $N$ must satisfy the following condition:
\begin{align}
\left[N_{1} x, N_{0} m\right]-N_{0}\left[N_{1} x, m\right]-N_{0}\left[x, N_{0} m\right]+N_{0}^{2}[x, m]=0 .
\end{align}

A Nijenhuis operator $\left(N_{0}, N_{1}\right)$ could give a trivial deformation by setting
\begin{align}
(\theta, \omega, \nu)=D\left(N_{0}, N_{1}\right) .
\end{align}

\begin{theorem}
 Let $N=\left(N_{0}, N_{1}\right)$ be a Nijenhuis operator. Then a deformation can be obtained by putting
$$
\left\{\begin{aligned}
\theta(m) & =\left(T \circ N_{0}-N_{1} \circ T\right)(m), \\
\omega(x, y) & =\left[N_{1} x, y\right]+\left[x, N_{1} y\right]-N_{1}[x, y] \\
\nu(x, m) & =\left[N_{1} x, m\right]+\left[x, N_{0} m\right]-N_{1}[x, m].
\end{aligned}\right.
$$
Furthermore, this deformation is trivial.
\end{theorem}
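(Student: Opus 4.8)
The plan is to derive the theorem from the single observation that $(T_0,T_1):=(\mathrm{id}+\lambda N_0,\mathrm{id}+\lambda N_1)$ intertwines the deformed triple $(M_{\lambda},\fg_{\lambda},T_{\lambda})$ with the undeformed one $(M,\fg,T)$, and then to pull every defining axiom of an embedding tensor on Malcev algebra back along $(T_0,T_1)$. Triviality of the deformation will then be immediate, since $(T_0,T_1)$ is, by definition, exactly the morphism required for trivial deformations. The whole argument is powered by the three identities $T\circ T_0(m)=T_1\circ T_{\lambda}(m)$ [\eqref{(26)}], $T_1[x,y]_{\lambda}=[T_1 x,T_1 y]$ [\eqref{(27)}] and $T_0\big(\rho_{\lambda}(x)(m)\big)=\rho(T_1 x)(T_0 m)$ [\eqref{(28)}], where the bracket and the action on the right-hand sides are the original ones on $\fg$ and $M$.

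First I would verify \eqref{(26)}, \eqref{(27)} and \eqref{(28)}. Substituting $T_0=\mathrm{id}+\lambda N_0$, $T_1=\mathrm{id}+\lambda N_1$, $T_{\lambda}=T+\lambda\theta$, $[~,~]_{\lambda}=[~,~]+\lambda\omega$, $\rho_{\lambda}=\rho+\lambda\nu$ and collecting powers of $\lambda$: the $\lambda^{0}$-coefficient is a tautology; the $\lambda^{1}$-coefficient is precisely one of the defining formulas $\theta=T\circ N_0-N_1\circ T$, $\omega(x,y)=[N_1x,y]+[x,N_1y]-N_1[x,y]$, $\nu(x,m)=[N_1x,m]+[x,N_0m]-N_0[x,m]$, i.e. the hypothesis $(\theta,\omega,\nu)=D(N_0,N_1)$; and the $\lambda^{2}$-coefficient is exactly the Nijenhuis condition (i) $N_1\theta=0$, (ii) $N_1\omega(x,y)=[N_1x,N_1y]$, resp. (iii) $N_0\nu(x,m)=[N_1x,N_0m]$. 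No higher powers of $\lambda$ occur. This is the same bookkeeping already carried out just before the definition of a Nijenhuis operator, now read in the converse direction, so \eqref{(26)}--\eqref{(28)} hold.

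Next I would transport the axioms. The key elementary point is that $T_0$ and $T_1$ are injective $\mathbb{R}[\lambda]$-linear maps, each being the identity plus a $\lambda$-multiple, so anything annihilated by $T_0$ or $T_1$ vanishes coefficient by coefficient. From \eqref{(27)}, $T_1$ is an $\mathbb{R}[\lambda]$-linear bracket homomorphism $(\fg_{\lambda},[~,~]_{\lambda})\to(\fg,[~,~])$; applying $T_1$ to the Malcev Jacobiator for $[~,~]_{\lambda}$ turns it into the Malcev Jacobiator for $[~,~]$ evaluated on $T_1$-images, which vanishes since $\fg$ is Malcev, and injectivity of $T_1$ then yields the Malcev identity for $[~,~]_{\lambda}$. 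Applying $T_0$ to the representation identity \eqref{def:rep} for $\rho_{\lambda}$ and using \eqref{(27)} and \eqref{(28)} reduces it to \eqref{def:rep} for $\rho$ on $T_1$- and $T_0$-images, so injectivity of $T_0$ gives that $(M_{\lambda},\rho_{\lambda})$ is a representation of $\fg_{\lambda}$. Finally, applying $T_1$ to $[T_{\lambda}(m),T_{\lambda}(n)]_{\lambda}-T_{\lambda}\big(\rho_{\lambda}(T_{\lambda}(m))(n)\big)$ and combining \eqref{(26)}, \eqref{(27)}, \eqref{(28)} with \eqref{def:et} for $T$ shows this element is killed by $T_1$, hence is zero, so $T_{\lambda}$ is an embedding tensor operator. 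Thus $(M_{\lambda},\fg_{\lambda},T_{\lambda})$ is an embedding tensor on Malcev algebra, i.e. $(\theta,\omega,\nu)$ generates a $1$-parameter infinitesimal deformation; and since $(T_0,T_1)$ is a morphism from it to $(M,\fg,T)$, the deformation is trivial.

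I expect the main obstacle to be the first step -- carefully matching the $\lambda$-coefficients of \eqref{(26)}--\eqref{(28)} -- together with the point that injectivity of $T_0,T_1$ (rather than invertibility) already suffices to pull each axiom back. As an alternative one could instead invoke the preceding characterization of deformations: $(\theta,\omega,\nu)=D(N_0,N_1)$ is a $2$-coboundary, hence a $2$-cocycle, so \eqref{(16)}, \eqref{(19)}, \eqref{(23)} hold automatically, and one then checks the remaining conditions \eqref{(17)}, \eqref{(20)}, \eqref{(24)} and that $(M,\fg,\theta)$ with bracket $\omega$ and action $\nu$ is an embedding tensor on Malcev algebra, all consequences of (i), (ii), (iii); but this route costs more computation and does not directly give triviality.
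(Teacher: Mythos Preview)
Your argument is correct and takes a genuinely different route from the paper's. The paper invokes the preceding characterization theorem: it observes that $(\theta,\omega,\nu)=D(N_0,N_1)$ is a $2$-coboundary and hence a $2$-cocycle, asserts without details that $(\theta,\omega,\nu)$ also defines an embedding tensor on Malcev algebra (i.e. conditions \eqref{(18)}, \eqref{(21)}, \eqref{(25)}), and concludes via Theorem~5.1 that a deformation is generated; triviality and the mixed conditions \eqref{(17)}, \eqref{(20)}, \eqref{(24)} are not addressed separately. Your approach instead builds the prospective trivializing morphism $(T_0,T_1)=(\mathrm{id}+\lambda N_0,\mathrm{id}+\lambda N_1)$ first, verifies \eqref{(26)}--\eqref{(28)} by matching $\lambda$-coefficients against the three Nijenhuis axioms, and then pulls every structural identity (Malcev, representation, embedding tensor) back along the injective $(T_0,T_1)$. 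This packages existence of the deformation and its triviality into a single argument and bypasses the coefficient-by-coefficient verification of \eqref{(16)}--\eqref{(25)}; the paper's route, by contrast, makes the cohomological meaning of ``coboundary $\Rightarrow$ trivial'' explicit but leaves more computations to the reader. You correctly identify the paper's approach as the ``alternative'' in your final paragraph.
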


\begin{proof}
 Since $(\theta, \omega, \nu)=D\left(N_{0}, N_{1}\right)$, it is obvious that $(\theta, \omega, \nu)$ is a 2-cocycle. It is easy to check that $(\theta, \omega, \nu)$ defines an embedding tensor on Malcev algebra. Thus by Theorem $3.1,(\theta, \omega, \nu)$ generates a deformation.
 \end{proof}

 Now we consider the general formal deformations of any order. Let $(M, \fg, T)$ be an embedding tensor on Malcev algebra and $\theta_{i}: M \rightarrow \fg, \omega_{i}: \wedge^{2} \fg \rightarrow \fg, \nu_{i}: \fg \otimes M\rightarrow M, i \geqslant 0$ be linear maps where $\theta_{0}=$ $T, \omega_{0}(x, y)=[x, y]_{\fg}, \nu_{0}(x, m)=\rho(x)(m)$. Consider a $\lambda$-parametrized family of linear operations:
\begin{align}
T_{\lambda}(m) & \triangleq T(m)+\lambda \theta_{1}(m)+\lambda^{2} \theta_{2}(m)+\cdots, \\
\omega_{\lambda}(x, y) & \triangleq[x, y]+\lambda \omega_{1}(x, y)+\lambda^{2} \omega_{2}(x, y)+\cdots, \\
\nu_{\lambda}(x,m) & \triangleq \rho(x)(m)+\lambda \nu_{1}(x, m)+\lambda^{2} \nu_{2}(x, m)+\cdots.
\end{align}
In order that $\left(M_{\lambda}, \fg_{\lambda}, f_{\lambda}\right)$ forms an embedding tensor on Malcev algebra, we must have
\begin{align}
&\omega_{\lambda}(\omega_{\lambda}(x, z), \omega_{\lambda}(y, t))-\omega_{\lambda}(\omega_{\lambda}(\omega_{\lambda}(x, y), z), t)-\omega_{\lambda}(\omega_{\lambda}(\omega_{\lambda}(y, z), t), x) \nonumber \\
&-\omega_{\lambda}(\omega_{\lambda}(\omega_{\lambda}(z, t), x), y)-\omega_{\lambda}(\omega_{\lambda}(\omega_{\lambda}(t, x), y), z)=0,\\
\nonumber \\
&\nu_{\lambda}(\omega_{\lambda}(\omega_{\lambda}(x, y), z),m)-\nu_{\lambda}(x, \nu_{\lambda}(y, \nu_{\lambda}(z, m)))+\nu_{\lambda}(z, \nu_{\lambda}(x, \nu_{\lambda}(y, m))) \nonumber \\
&-\nu_{\lambda}(y, \nu_{\lambda}(\omega_{\lambda}(z, x), m))+\nu_{\lambda}(\omega_{\lambda}(y, z), \nu_{\lambda}(x, m))=0,\\
\nonumber \\
&\omega_{\lambda}(T_{\lambda}(m),T_{\lambda}(n))-T_{\lambda} \big(\nu_{\lambda}(T_{\lambda}(m), n)\big)=0.
\end{align}
which implies that
\begin{align}
\sum_{i+j+k=l}\big(&\omega_{i}(\omega_{j}(x,z),\omega_{k}(y,t))-\omega_{i}(\omega_{j}(\omega_{k}(x, y), z), t)-\omega_{i}(\omega_{j}(\omega_{k}(y, z), t), x) \nonumber \\
\label{41}&-\omega_{i}(\omega_{j}(\omega_{k}(z, t), x), y)-\omega_{i}(\omega_{j}(\omega_{k}(t, x), y), z)=0,\\
\nonumber \\
\sum_{i+j+k=l} \big(&\nu_{i}(\omega_{j}(\omega_{k}(x, y), z)),m)-\nu_{i}(x, \nu_{j}(y, \nu_{k}(z, m)))+\nu_{i}(z, \nu_{j}(x, \nu_{k}(y, m))) \nonumber \\
\label{42}&-\nu_{i}(y, \nu_{j}(\omega_{k}(z, x), m))+\nu_{i}(\omega_{j}(y, z), \nu_{k}(x,m))\big)=0,\\
\nonumber \\
\label{43}\sum_{i+j+k=l} \big(&\omega_{i}(T_{j}(m),T_{k}(n))-T_{i} \big(\nu_{j}(T_{k}(m), n)\big)\big)=0.
\end{align}
For $l=0$, conditions \eqref{41}, \eqref{42} and \eqref{43} are equivalent to that $\left(\omega_{0}, \nu_{0}, \theta_{0}\right)$ is an embedding tensor on Malcev algebra.
For $l=1$, these conditions are equivalent to  that  $\left(\omega_{1}, \nu_{1}, \theta_{1}\right)$ is a 2-cocycle.

\begin{definition}
The 2-cochain $\left(\omega_{1}, \nu_{1}, \theta_{1}\right)$ is called the infinitesimal of $\left(\omega_{\lambda}, \nu_{\lambda}, T_{\lambda}\right)$. More generally, if $\left(\omega_{i}, \nu_{i}, T_{i}\right)=0$ for $1 \leqslant i \leqslant(n-1)$, and $\left(\omega_{n}, \nu_{n}, T_{n}\right)$ is a non-zero cochain in $C^{2}((M, \fg, T),(M, \fg, T))$, then $\left(\omega_{n}, \nu_{n}, T_{n}\right)$ is called the $n$-infinitesimal of the deformation $\left(\omega_{\lambda}, \nu_{\lambda}, T_{\lambda}\right)$.
\end{definition}

Let $\left(\omega_{\lambda}, \nu_{\lambda}, T_{\lambda}\right)$ and $\left(\omega_{\lambda}^{\prime}, \nu_{\lambda}^{\prime}, T_{\lambda}^{\prime}\right)$ be two deformation. We say that they are equivalent if there exists a formal isomorphism $\left(\Phi_{\lambda}, \Psi_{\lambda}\right):\left(M_{\lambda}^{\prime}, \fg_{\lambda}^{\prime}, T_{\lambda}^{\prime}\right) \rightarrow\left(M_{\lambda}, \fg_{\lambda}, T_{\lambda}\right)$ such that $\omega_{\lambda}^{\prime}(x, y)=$ $\Psi_{\lambda}^{-1} \omega_{\lambda}\left(\Psi_{\lambda}(x), \Psi_{\lambda}(y)\right)$

A deformation $\left(\omega_{\lambda}, \nu_{\lambda}, T_{\lambda}\right)$ is said to be the trivial deformation if it is equivalent to $\left(\omega_{0}, \nu_{0}, \theta_{0}\right)$.

\begin{theorem}\label{thm4.6}
Let $\left(\omega_{\lambda}, \nu_{\lambda}, T_{\lambda}\right)$ and $\left(\omega_{\lambda}^{\prime}, \nu_{\lambda}^{\prime}, T_{\lambda}^{\prime}\right)$ be equivalent deformations of $(M, \fg, T)$, then the first-order terms of them belong to the same cohomology class in the second cohomology group $H^{2}((M, \fg, T),(M, \fg, T))$.
\end{theorem}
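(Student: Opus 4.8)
The plan is to unwind the definition of an equivalence of formal deformations, expand the two intertwining maps to first order in $\lambda$, and read off from the $\lambda^1$-terms that the two infinitesimals differ by a $2$-coboundary. An equivalence is, by definition, a formal isomorphism $(\Phi_\lambda,\Psi_\lambda)\colon (M_\lambda',\fg_\lambda',T_\lambda')\to(M_\lambda,\fg_\lambda,T_\lambda)$ of embedding tensor Malcev algebras, where $\Psi_\lambda=\mathrm{id}+\lambda\Psi_1+\lambda^2\Psi_2+\cdots$ acts on $\fg$ and $\Phi_\lambda=\mathrm{id}+\lambda\Phi_1+\lambda^2\Phi_2+\cdots$ acts on $M$, subject to the three compatibility conditions
\begin{align*}
\Psi_\lambda\bigl(\omega_\lambda'(x,y)\bigr)&=\omega_\lambda\bigl(\Psi_\lambda x,\Psi_\lambda y\bigr),\\
\Phi_\lambda\bigl(\nu_\lambda'(x,m)\bigr)&=\nu_\lambda\bigl(\Psi_\lambda x,\Phi_\lambda m\bigr),\\
\Psi_\lambda\circ T_\lambda'&=T_\lambda\circ\Phi_\lambda.
\end{align*}
Since $\Psi_\lambda$ and $\Phi_\lambda$ start with the identity, their formal inverses are $\mathrm{id}-\lambda\Psi_1+O(\lambda^2)$ and $\mathrm{id}-\lambda\Phi_1+O(\lambda^2)$; only this first-order information is needed.

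Next I would collect the coefficients of $\lambda^1$ on both sides of each of these three identities, using $\omega_0=[~,~]$, $\nu_0=\rho$ and $\theta_0=T$. Since $\omega_\lambda,\nu_\lambda,T_\lambda$ are (formal series of) multilinear maps, this expansion is routine and yields
$$
\omega_1(x,y)-\omega_1'(x,y)=[\Psi_1 x,y]+[x,\Psi_1 y]-\Psi_1[x,y],
$$
$$
\nu_1(x,m)-\nu_1'(x,m)=\rho(\Psi_1 x)(m)+\rho(x)(\Phi_1 m)-\Phi_1\bigl(\rho(x)(m)\bigr),
$$
$$
\theta_1(m)-\theta_1'(m)=\Psi_1\bigl(T(m)\bigr)-T\bigl(\Phi_1 m\bigr).
$$
Comparing these right-hand sides with the formulas defining the exact $2$-cochain $D(N_0,N_1)$ in the self-coefficient (adjoint) setting, one recognizes them as precisely the three components of $D(\Phi_1,\Psi_1)$ — up to the overall sign fixed in the paper for each component of $D$. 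Hence $(\omega_1,\nu_1,\theta_1)-(\omega_1',\nu_1',\theta_1')=D(\Phi_1,\Psi_1)$ is a $2$-coboundary.

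Finally, both $(\omega_1,\nu_1,\theta_1)$ and $(\omega_1',\nu_1',\theta_1')$ are $2$-cocycles of $(M,\fg,T)$ with coefficients in $(M,\fg,T)$ — this is the $l=1$ instance of the obstruction equations \eqref{41}, \eqref{42}, \eqref{43} observed above — so they represent the same class in $H^2((M,\fg,T),(M,\fg,T))$, which is the assertion. I expect the main obstacle to be bookkeeping rather than anything conceptual: one has to be careful to align the signs of the three $\lambda^1$-identities with the precise form of the coboundary operator used in the paper, and one must invoke the full definition of a formal isomorphism of embedding tensor Malcev algebras — the intertwining of the deformed actions and the compatibility with the deformed embedding tensors, not merely the conjugation formula for $\omega_\lambda$ written above. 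It is worth noting that, unlike a Nijenhuis operator, the pair $(\Phi_1,\Psi_1)$ is subject to no auxiliary constraint for $D(\Phi_1,\Psi_1)$ to be defined, so no additional hypotheses are needed.
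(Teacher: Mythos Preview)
Your proposal is correct and follows essentially the same approach as the paper: write the equivalence $(\Phi_\lambda,\Psi_\lambda)$ as $\mathrm{id}+\lambda\,(\cdot)+\cdots$, expand the three intertwining identities to first order, and identify the resulting difference $(\omega_1,\nu_1,\theta_1)-(\omega_1',\nu_1',\theta_1')$ with $D(\Phi_1,\Psi_1)$. Your write-up is in fact more explicit than the paper's, which simply asserts that expanding $\Psi_\lambda\omega_\lambda'(x,y)=\omega_\lambda(\Psi_\lambda x,\Psi_\lambda y)$ and its companions yields the coboundary relation; your caveat about matching overall signs with the paper's convention for $D$ is well placed, since with the orientation $(\Phi_\lambda,\Psi_\lambda)\colon(\,\cdot\,)'\to(\,\cdot\,)$ one actually obtains $(\omega_1',\nu_1',\theta_1')-(\omega_1,\nu_1,\theta_1)=D(\Phi_1,\Psi_1)$, which of course does not affect the cohomology-class conclusion.
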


\begin{proof}
Let $\left(\Phi_{\lambda}, \Psi_{\lambda}\right):\left(M_{\lambda}, \fg_{\lambda}, T_{\lambda}\right) \rightarrow\left(M_{\lambda}^{\prime}, \fg_{\lambda}^{\prime}, T_{\lambda}^{\prime}\right)$ be an equivalence where $\Phi_{\lambda}=\operatorname{id}_{M}+\lambda \phi_{1}+$ $\lambda^{2} \phi_{2}+\cdots$ and $\Psi_{\lambda}=\operatorname{id}_{M}+\lambda \psi_{1}+\lambda^{2} \psi_{2}+\cdots$. Then we have $\Psi_{\lambda} \omega_{\lambda}^{\prime}(x, y)=\omega_{\lambda}\left(\Psi_{\lambda}(x), \Psi_{\lambda}(y)\right)$, $\Psi_{\lambda} \nu_{\lambda}^{\prime}(x, v)=\nu_{\lambda}\left(\Phi_{\lambda}(x), \Psi_{\lambda}(v)\right)$. Then by expanding the above equality, we have $\left(\theta_{1}, \omega_{1}, \nu_{1}\right)-$ $\left(\theta_{1}^{\prime}, \omega_{1}^{\prime}, \nu_{1}^{\prime}\right)=D\left(\phi_{1}, \psi_{1}\right)$. Thus $\left(\theta_{1}, \omega_{1}, \nu_{1}\right)$ and $\left(\theta_{1}^{\prime}, \omega_{1}^{\prime}, \nu_{1}^{\prime}\right)$ are belong to the same cohomology class in the second cohomology group. The proof is finished.
\end{proof}

An embedding tensor $(M, \fg, T)$ is called rigid if every deformation $\left(\omega_{\lambda}, \nu_{\lambda}, T_{\lambda}\right)$ is equivalent to the trivial deformation.

\begin{theorem}
If $H^{2}((M, \fg, T),(M, \fg, T))=0$, then $(M, \fg, T)$ is rigid.
\end{theorem}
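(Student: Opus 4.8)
The plan is to run the standard inductive \emph{cohomological polishing} argument. Let $\left(\omega_{\lambda}, \nu_{\lambda}, T_{\lambda}\right)$ be an arbitrary formal deformation of $(M,\fg,T)$. As noted right after \eqref{43}, its first-order term $(\theta_{1},\omega_{1},\nu_{1})$ is a $2$-cocycle of $(M,\fg,T)$ with coefficients in the adjoint representation. Since $H^{2}((M,\fg,T),(M,\fg,T))=0$, there is a pair of linear maps $(\phi_{1},\psi_{1})$, with $\phi_{1}\colon M\to M$ and $\psi_{1}\colon\fg\to\fg$, such that $(\theta_{1},\omega_{1},\nu_{1})=D(\phi_{1},\psi_{1})$.

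First I would show that any such deformation is equivalent to one whose first-order term vanishes. Put $\Phi_{\lambda}=\operatorname{id}_{M}+\lambda\phi_{1}$ and $\Psi_{\lambda}=\operatorname{id}_{\fg}+\lambda\psi_{1}$; these are formal isomorphisms with formal inverses $\operatorname{id}-\lambda\phi_{1}+\lambda^{2}\phi_{1}^{2}-\cdots$, and similarly for $\Psi_{\lambda}$. Define a new deformation $(\omega_{\lambda}^{\prime},\nu_{\lambda}^{\prime},T_{\lambda}^{\prime})$ by transporting $(\omega_{\lambda},\nu_{\lambda},T_{\lambda})$ along $(\Phi_{\lambda},\Psi_{\lambda})$, i.e.\ $\omega_{\lambda}^{\prime}(x,y)=\Psi_{\lambda}^{-1}\omega_{\lambda}(\Psi_{\lambda}x,\Psi_{\lambda}y)$, and likewise for $\nu_{\lambda}^{\prime}$ and $T_{\lambda}^{\prime}$. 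Expanding in powers of $\lambda$ and using $(\theta_{1},\omega_{1},\nu_{1})=D(\phi_{1},\psi_{1})$ --- which is exactly the computation behind Theorem~\ref{thm4.6} read in reverse --- one finds that the $\lambda^{1}$-coefficients of $(\omega_{\lambda}^{\prime},\nu_{\lambda}^{\prime},T_{\lambda}^{\prime})$ all vanish, so $(\omega_{\lambda},\nu_{\lambda},T_{\lambda})$ is equivalent to a deformation whose lowest nontrivial term lies in degree $\geq 2$.

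Next I would iterate. Suppose that, after finitely many such steps, we have reached an equivalent deformation with $(\theta_{i},\omega_{i},\nu_{i})=0$ for $1\leq i\leq n-1$. Extracting the coefficient of $\lambda^{n}$ in \eqref{41}, \eqref{42} and \eqref{43} and using that all lower-order correction terms vanish, the identities collapse to precisely the $2$-cocycle conditions for the $n$-infinitesimal $(\theta_{n},\omega_{n},\nu_{n})$. Since $H^{2}=0$, this $n$-cochain equals $D(N_{0},N_{1})$ for some pair $(N_{0},N_{1})$, and transporting along $\operatorname{id}+\lambda^{n}(N_{0},N_{1})$ yields an equivalent deformation whose first nonvanishing term has order $\geq n+1$, while leaving the coefficients of $\lambda^{k}$ for $k<n$ (in particular, the vanishing achieved so far) untouched. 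Because each stage modifies only the coefficients of $\lambda^{k}$ with $k\geq n$, the infinite composition of these formal isomorphisms is a well-defined formal power series, and it carries $(\omega_{\lambda},\nu_{\lambda},T_{\lambda})$ to $(\omega_{0},\nu_{0},\theta_{0})=([\,\cdot,\cdot\,],\rho,T)$. Hence the original deformation is trivial, and $(M,\fg,T)$ is rigid.

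The main obstacle is the inductive step: one must verify carefully that once the first $n-1$ correction terms vanish, the order-$\lambda^{n}$ part of the three deformation equations \eqref{41}--\eqref{43} is exactly the closedness condition for $(\theta_{n},\omega_{n},\nu_{n})$ in the adjoint complex, with no residual cross terms surviving --- the same bookkeeping as in the $l=1$ case, but performed at level $n$. The convergence of the composed formal automorphism is then automatic, since the correction introduced at stage $n$ affects only coefficients of $\lambda^{\geq n}$.
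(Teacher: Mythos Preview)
Your proposal is correct and follows the standard Gerstenhaber-style inductive argument, which is what the paper intends; the paper's own proof is an extremely compressed (and notationally loose) version of the same idea, essentially asserting that the infinitesimal is a coboundary and skipping the induction you spell out. Your write-up is in fact more careful than the paper's, which conflates the full deformation with a single $2$-cochain and omits the step-by-step killing of higher-order terms.
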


\begin{proof}
Let $\left(\omega_{\lambda},\nu_{\lambda}, T_{\lambda}\right)$ be a deformation of $(M, \fg, T)$. It follows from above Theorem \ref{thm4.6} that $D\left(\omega_{\lambda}, \nu_{\lambda}, T_{\lambda}\right)=0$, that is $\left(\omega_{\lambda}, \nu_{\lambda}, T_{\lambda}\right) \in Z^{2}((M, \fg, T),(M, \fg, T))$. Now we assume $H^{2}((M, \fg, T),(M, \fg, T))=0$, we can find $\left(N_{0}, N_{1}\right)$ such that $\left(\omega_{\lambda}, \nu_{\lambda}, T_{\lambda}\right)=D\left(N_{0}, N_{1}\right)$. Thus $\left(\omega_{\lambda}, \nu_{\lambda}, T_{\lambda}\right)$ is equivalent to the trivial deformation. This proof is completed.
\end{proof}

\vskip7pt
\footnotesize{
\noindent Tao Zhang\\
College of Mathematics and Information Science,\\
Henan Normal University, Xinxiang 453007, P. R. China;\\
 E-mail address: \texttt{{zhangtao@htu.edu.cn}}

\vskip7pt
\footnotesize{
\noindent Wei Zhong\\
College of Mathematics and Information Science,\\
Henan Normal University, Xinxiang 453007, P. R. China};\\
 E-mail address: \texttt{{zhongweiHTU@yeah.net }}

\end{document}